\newtheorem{theorem}{Theorem}[]
\newtheorem*{theorem*}{Theorem}
\newtheorem{corollary}[theorem]{Corollary}
\newtheorem{lemma}[theorem]{Lemma}
\newtheorem{proposition}[theorem]{Proposition}
\newtheorem*{claim*}{Claim}
\theoremstyle{definition}
\newtheorem{definition}[theorem]{Definition}
\newtheorem*{definition*}{Definition}
\theoremstyle{AppDefinition}
\theoremstyle{AppClaim}
\theoremstyle{remark}
\newtheorem{remark}[theorem]{Remark}
\newtheorem{example}[theorem]{Example}
\newtheorem*{example*}{Example}
\def\log{{\rm log}}
\newcommand*\diff{\mathop{}\!\mathrm{d}}
\newcommand*{\op}{%
  \DOTSB
  \mathop{\vphantom{\bigoplus}\mathpalette\matt@op\relax}%
  \slimits@
}
\newcommand\matt@op[2]{%
  \vcenter{\m@th\hbox{\resizebox{\widthof{$#1\bigoplus$}}{!}{$\boxplus$}}}%
}
\def\dsum{\displaystyle\sum}
\def\@biblabel#1{}
\@citea\NAT@hyper@{%
     \NAT@nmfmt{\NAT@nm}%
     \hyper@natlinkbreak{\NAT@aysep\NAT@spacechar}{\@citeb\@extra@b@citeb}%
     \NAT@date}}
\@citea\NAT@nmfmt{\NAT@nm}%
\NAT@spacechar\NAT@hyper@{\NAT@date}}{}{}
\@citea\NAT@hyper@{%
     \NAT@nmfmt{\NAT@nm}%
     \hyper@natlinkbreak{\NAT@spacechar\NAT@@open\if*#1*\else#1\NAT@spacechar\fi}%
       {\@citeb\@extra@b@citeb}%
     \NAT@date}}
\@citea\NAT@nmfmt{\NAT@nm}%
\fi\NAT@hyper@{\NAT@date}}
\begin{document}
\def\spacingset#1{\renewcommand{\baselinestretch}%
{#1}\small\normalsize} \spacingset{1}
%\spacingset{1.45} % DON'T change the spacing!
\begin{flushleft}
{\Large{\textbf{A Geometric Condition for Uniqueness of Fréchet Means of Persistence Diagrams}}}
\newline
\\
Yueqi Cao$^{1}$ and Anthea Monod$^{1,\dagger}$
\\
\bigskip
\bf{1} Department of Mathematics, Imperial College London, UK
\\
\bigskip
$\dagger$ Corresponding e-mail: y.cao21@imperial.ac.uk
\end{flushleft}

%%%%%%%%%%%%%%%%%%%%%%%%%%%%%%%%%%%%%%%%%%%%%%%%%%%

\section*{Abstract}

The Fréchet mean is an important statistical summary and measure of centrality of data; it has been defined and studied for persistent homology captured by  persistence diagrams. However, the complicated geometry of the space of persistence diagrams implies that the Fréchet mean for a given set of persistence diagrams is not necessarily unique, which prohibits theoretical guarantees for empirical means with respect to population means. In this paper, we derive a variance expression for a set of persistence diagrams exhibiting a multi-matching between the persistence points known as a grouping.  Moreover, we propose a condition for groupings, which we refer to as flatness; we prove that sets of persistence diagrams that exhibit flat groupings give rise to unique Fréchet means. We derive a finite sample convergence result for general groupings, which results in convergence for Fréchet means if the groupings are flat. We then interpret flat groupings in a recently-proposed general framework of Fréchet means in Alexandrov geometry. Finally, we show that for manifold-valued data, the persistence diagrams can be truncated to construct flat groupings.

\paragraph{Keywords:} Alexandrov geometry; Fréchet means; persistent homology; nonnegative curvature.

%%%%%%%%%%%%%%%%%%%%%%%%%%%%%%%%%%%%%%%%%%%%%%%%%%%

\section{Introduction}
\label{sec:intro}

Persistent homology is an important methodology from topological data analysis which has gained rapid interest in recent decades and by now has been widely implemented in many applications across diverse scientific domains.  Given that its primary purpose is to summarize topological and geometric aspects of data---specifically, it captures the ``shape'' and ``size'' of a given dataset in the form of a \emph{persistence diagram}---studying statistical aspects of persistent homology is a central question in topological data analysis.  In statistics, the Fréchet mean is an extension of the usual arithmetic mean to general metric spaces, which is known to exist in the space of persistence diagrams and has been previously defined and studied for sets of persistence diagrams \cite{mileyko2011probability,turner2013means}.  In general, Fréchet means need not be unique and this is often the case for persistence diagrams, which significantly complicates important statistical questions, such as those concerning convergence.  This paper studies the metric geometry of the space of persistence diagrams to give conditions for unique Fréchet means and derive corresponding convergence results.

\paragraph{Related Work.}

Results on Fréchet means for sets of persistence diagrams have been established including convergence assuming uniqueness of the Fréchet mean \citep{turner2014frechet}.  Due to the complicated geometry of the space of persistence diagrams, it is not known when Fréchet means of sets of persistence diagrams are unique. The lack of a condition for uniqueness prohibits a comprehensive convergence analysis for empirical Fréchet means of persistence diagrams computed from real datasets \citep{cao2022approximating}.  We are thus restricted to only making descriptive and exploratory observations with Fréchet means computed from sampled data and cannot inference on the general behavior of the data generating process and the general unseen population with any theoretical guarantees.

Recently, a general theory for empirical Fréchet means in Alexandrov spaces with curvature bounds from either below or above has been proposed \citep{le2022fast}; this setting is in fact the geometric characterization of the space of persistence diagrams equipped with the 2-Wasserstein distance \citep{turner2014frechet}. The developed theory has been successfully applied to many cases, for example, barycenters of Gaussian distributions \citep{altschuler2021averaging}, template deformation models \citep{yoav}, and metric measure Laplacians \citep{mordant2022statistical}.\\

\paragraph{Contributions.}

Our main results are summarized as follows:

\begin{itemize}
\item We propose a geometric condition on sets of persistence diagrams that guarantees uniqueness of Fréchet means.  In particular, we consider a multi-matching representation between persistence points known as a {\em grouping} \citep{munch2015probabilistic}.

\item We derive a variance expression for general groupings.  We then propose a geometric condition on groupings, which we refer to as {\em flatness}, and show that flat groupings give rise to unique Fréchet means.

\item We then derive a finite sample convergence rate for groupings where the population probability measure is supported on a finite set. Our convergence result applies to Fréchet means of persistence diagrams if there exists a flat grouping.

\item We provide a position study of the space of persistence diagrams in the the general framework of Alexandrov spaces with curvature bounded from above or below \cite{le2022fast}, discussing the limitations of the general framework in the case of persistence diagram space.  We then provide explicit computations of particular geometric quantities of flat groupings and reconstruct results to constitute a better theoretical understanding of the Alexandrov geometry of Fréchet means of sets of persistence diagrams.

\item We discuss and numerically demonstrate the role of flat groupings in the problem of approximating the true persistence diagram of manifold-valued data using the Fréchet mean.  Specifically, we show that flat groupings may be constructed from persistence diagrams where a strip along the diagonal is truncated, so then we get uniqueness of Fréchet means of truncated persistence diagrams which can then be used to approximate persistent homology.

\end{itemize}

The remainder of this paper is organized as follows.  In Section \ref{sec:review}, we provide background and details on persistent homology and metric geometry, and in particular, the metric geometry of persistence diagrams and the space of persistence diagrams.  In Section \ref{sec:group}, we recall definitions of a grouping and a Fréchet mean, which are our specific objects of interest in this paper.  Here, we also present our contributions of an expression for the variance of general groupings, our proposed notion of flatness of groupings, and prove uniqueness of Fréchet means for sets of persistence diagrams for which there exist flat groupings.  We also derive finite sample convergence rates for general and flat groupings.  Section \ref{sec:conv} presents the flat grouping under the framework of Fréchet mean in Alexandrov spaces.  Section \ref{sec:truncatedPD} provides a practical demonstration of the role of flat groupings in a problem of approximating the persistent homology of manifold-valued data.  We close in Section \ref{sec:end} with a discussion of our findings and some ideas for future research based on our contributions in this paper.

%\subsection{Related work}

\section{Background: Persistent Homology, Metric Geometry,\\
and Metric Geometry of Persistent Homology}
\label{sec:review}

In this section, we provide background and details on our setting and objects of study: persistent homology, which gives rise to persistence diagrams, and the space of all persistence diagrams.  We also review some concepts from metric geometry that will be essential for our study and construction of our results.

\subsection{Persistent Homology}

The standard pipeline of persistent homology begins with a filtration, which is a nested sequence of topological spaces: $\mathcal{M}_0 \subseteq \mathcal{M}_1 \subseteq \cdots \subseteq \mathcal{M}_n=\mathcal{M}$. By applying the homology functor $H(\cdot)$ with coefficients in a field, we have the sequence of homology vector spaces
$H(\mathcal{M}_0)\to H(\mathcal{M}_1)\to\cdots\to H(\mathcal{M}_n)$.
The collection of vector spaces $H(\mathcal{M}_i)$, together with vector space homomorphisms $H(\mathcal{M}_i)\to H(\mathcal{M}_j),i<j$, is called a \emph{persistence module}. When each $H(\mathcal{M}_i)$ is finite dimensional, a persistence module can be decomposed into a direct sum of irreducible summands called \emph{interval modules}, which correspond to birth and death times of homology classes \citep{chazal2016structure}. The collection of birth--death intervals $[\epsilon_i,\epsilon_j)$ is called a \emph{barcode} and it represents the \emph{persistent homology} of the filtration of $\mathcal{M}$. Each interval can also be identified as the coordinate of a point in the plane $\mathbb{R}^2$. In this way we have an alternate representation known as a \emph{persistence diagram}. 
For a detailed introduction of persistence homology, see e.g., \cite{edelsbrunner2008persistent,edelsbrunner2010computational}.

\begin{definition}
\label{def:PD}
A {\em persistence diagram} $D$ is a locally finite multiset of points in the half-plane $\Omega = \{(x,y)\in\mathbb{R}^2 \mid x<y\}$ together with points on the diagonal $\partial\Omega=\{(x,x)\in\mathbb{R}^2\}$ counted with infinite multiplicity. Points in $\Omega$ are called {\em off-diagonal points}. The persistence diagram with no off-diagonal points is called the {\em empty persistence diagram}, denoted by $D_\emptyset$.
\end{definition}

The geometry and statistical properties of the space of persistence diagrams are the main focus of this paper. 

\subsection{Metric Geometry}\label{sec:metric-geometry}

We now outline essential concepts from metric geometry and refer to \cite{burago2001course} for a comprehensive and detailed discussion. 

Let $(\mathcal{M},d)$ be an arbitrary metric space. For any two points $x,y\in \mathcal{M}$, a \emph{geodesic} connecting $x$ and $y$ is a continuous curve $\gamma:[a,b]\to\mathcal{M}$ such that for any $a\leq s\leq t\leq b$,
\begin{equation*}
    d(\gamma(s),\gamma(t)) = \frac{t-s}{b-a}d(x,y).
\end{equation*}
$(\mathcal{M},d)$ is called a \emph{geodesic space} if any two points can be joined by a geodesic. A geodesic space is an \emph{Alexandrov space with nonnegative curvature} if for every triangle $\{x_0,x_1,y\}\subseteq \mathcal{M}$ and a geodesic $\gamma:[0,1]\to \mathcal{M}$ connecting $x_0$ and $x_1$ there exists an isometric triangle $\{\tilde{x}_0,\tilde{x}_1,\tilde{y}\}$ in $\mathbb{R}^2$ such that 
$
    d(y,\gamma(t))\ge \|\tilde{y}-\tilde{\gamma}(t)\|,
$
where $\tilde{\gamma}(t) = t\tilde{x}_1+(1-t)\tilde{x}_0$ is the line segment joining $\tilde{x}_0$ and $\tilde{x}_1$ in the Euclidean plane. 

Given $z\in \mathcal{M}$, let $\Gamma_z$ be the set of all geodesics emanating from $z$. For any two geodesics $\gamma_0,\gamma_1\in\Gamma_z$, the \emph{Alexandrov angle} $\angle_z(\gamma_0,\gamma_1)$ is defined by
\begin{equation*}
    \angle_z(\gamma_0,\gamma_1) = \lim_{s,t\to 0}\cos^{-1}\left(\frac{d^2(z,\gamma_0(t))+d^2(z,\gamma_1(s))-d^2(\gamma_0(t),\gamma_1(s))}{2d(z,\gamma_0(t))d(z,\, \gamma_1(s))}\right).
\end{equation*}
If $(\mathcal{M},d)$ is an Alexandrov space with nonnegative curvature then $\angle_z:\Gamma_z\times \Gamma_z\to[0,\pi]$ is well-defined and a pseudo-metric on $\Gamma_z$. Therefore $\angle_z$ defines a metric on the quotient space $\Gamma_z/\sim$ where $\gamma_0\sim\gamma_1$ if and only if $\angle_z(\gamma_0,\gamma_1)=0$. 

The completion $(\widehat{\Gamma}_z,\angle_z)$ of $(\Gamma_z/\sim,\angle_z)$ is called the \emph{space of directions}. Let $v_\gamma$ denote the direction of $\gamma$ at $z$, i.e., the equivalence class of $\gamma$ in $\widehat{\Gamma}_z$. The \emph{tangent cone} $T_z\mathcal{M}$ is defined as $\widehat{\Gamma}_z\times \mathbb{R}_+/\sim$ where $(v_\gamma,t)\sim (v_\eta,s)$ if and only if $t=s=0$ or $(v_\gamma,t)=(v_\eta,s)$. Let $[v_\gamma,t],\, [v_\eta,s]\in T_z\mathcal{M}$ be two tangent vectors, then define 
\begin{equation}
\label{eq:cone_metric}
    \mathrm{C}_z([v_\gamma,t],[v_\eta,s]) = \sqrt{s^2+t^2-2st\cos\angle_z(v_\gamma,v_\eta)}.
\end{equation}
$\mathrm{C}_z$ is a metric on $T_z\mathcal{M}$ called the \emph{cone metric}.

For a geodesic space, the \emph{logrithmic map (log map) at $z$} $\log_z:\mathcal{M}\to T_z\mathcal{M}$ assigns $x$ to $[v_\gamma,\, d(z,x)]$ where $\gamma$ is a geodesic from $z$ to $x$. The log map is a multimap since there can be different geodesics from $z$ to $x$. By selecting an arbitrary direction for every $x$ the log map is a well-defined map, and moreover $\log_z$ can be chosen to be measurable with respect to the Borel algebra of $T_z\mathcal{M}$ \citep{le2022fast}.  

\subsection{Metric Geometry of Persistence Diagram Space}

The collection of all persistence diagrams may be viewed as a space; in particular, it is a metric space and hence its metric geometry may be studied \citep{che2024metric}.  Although there exist various possible metrics on the space of persistence diagrams, we focus on the following.

\begin{definition}
For any two persistence diagrams $D_1$ and $D_2$, define the {\em 2-Wasserstein distance} by
$$
\mathrm{W}_2(D_1,D_2) = \inf_{\phi}\left(\sum_{x\in D_1}\|x-\phi(x)\|^2\right)^{\frac{1}{2}}
$$
where $\phi$ ranges over all bijections between $D_1$ and $D_2$, and $\|\cdot\|$ denotes the 2-norm on $\mathbb{R}^2$. The {\em total persistence} of a persistence diagram $D$ is defined as $\mathrm{W}_2(D,D_\emptyset)$. Let $\mathcal{S}_2$ be the set of all persistence diagrams with finite total persistence. 
\end{definition}

\begin{figure}
    \centering
    \begin{tikzpicture}
    %make coordinate
    \draw[->] (-0.5,0)--(7,0);
    \draw[->] (0,-0.5)--(0,7);

    %boundary
    \filldraw[fill=green!10] (0,0) -- (6.5,0) -- (6.5,6.5) -- cycle;

    %A diagram
    \filldraw[fill=red] (1.5,4.5) circle (3pt);

    %distance to boundary
    \draw[black, thick] (1.5,4.5)node[left]{$A$} -- (3,3)node[below]{$A^\top$};

    %B diagram
    \filldraw[fill=blue] ([xshift=-3pt,yshift=-3pt]2.5,6.5) rectangle ++(6pt,6pt);

    %distance to boundary
    \draw[black, thick] (2.5,6.5)node[right]{$B$} -- (4.5,4.5)node[below]{$B^\top$};

    %distance from A to B
    \draw[black, thick] (1.5,4.5) -- (2.5,6.5); 

    %comparison distance from A to B
    \draw[purple, dash pattern=on 3pt off 4pt] (2.5,6.5) -- (3,3); 

    \end{tikzpicture}
    \caption{Curvature is determined by the boundary. Consider three persistence diagrams: diagram $D_A$ with a single off-diagonal point $A$, diagram $D_B$ with a single off-diagonal point $B$, and the empty diagram $D_\emptyset$ with no off-diagonal point. The three edges of triangle $\triangle D_\emptyset D_AD_B$ are plotted with solid lines. For the comparison triangle, given $\mathrm{W}_2(D_A,D_\emptyset)=\|AA^\top\|$, $\mathrm{W}_2(D_A,D_B)=\|AB\|$, and $\angle D_\emptyset D_AD_B=\angle A^\top AB$, the length of the third edge is $\|A^\top B\|$. We see that $\|A^\top B\|>\|BB^\top\|=\mathrm{W}_2(D_B,D_\emptyset)$, indicating nonnegative Alexandrov curvature.}
    \label{fig:curveness}
\end{figure}

Under the 2-Wasserstein distance, we now discuss several metric geometric characteristics of the space of persistence diagrams.  We have that $(\mathcal{S}_2,\mathrm{W}_2)$ is an Alexandrov space with nonnegative curvature \citep{turner2014frechet}; the curvature behavior is largely determined by the boundary, see Figure \ref{fig:curveness}.  Moreover, we have the following characterization of geodesics between persistence diagrams: Let $D_1$ and $D_2$ be two persistence diagrams with finite total persistence and $\phi:D_1\to D_2$ be an optimal matching. Then the geodesic $\gamma:[0,1]\to\mathcal{S}_2$ joining $D_1$ to $D_2$ is such that $\gamma(t)$ is in fact a persistence diagram with points of the form $(1-t)x+t\phi(x)$ where $x$ ranges all points from $D_1$.

We also have the following characterization concerning tangent vectors.  Let $D\in\mathcal{S}_2$ be a persistence diagram.  A tangent vector in the tangent cone $T_D\mathcal{S}_2$ is a collection of vectors $\{v_j\}_{j\in J}\subseteq \mathbb{R}^2$ such that  $\sum_{j\in J}\|v_j\|^2<\infty$; and if $I$ is the index set of off-diagonal points in $D$, then $I\subseteq J$ and $\frac{v_j}{\|v_j\|}=\frac{1}{\sqrt{2}}(-1,1)$ for $j\in J\backslash I$.

If a persistence diagram $D$ consists of finitely many off-diagonal points---that is, if $I$ is a finite set---then the tangent vectors at $D$ can be realized as tangent vectors of geodesics originating from $D$. In general, there may exist tangent vectors with no corresponding geodesics, as we show in the following example.

\begin{example}
Consider the persistence diagram $D=\{x_n=\big(0,\frac{1}{n^2}\big),\, n\in\mathbb{N}\}$. At each $x_n$, assign the vector $v_n=\big(\frac{1}{n},-\frac{1}{n}\big)$. We claim that the collection $V=\{v_n,n\in\mathbb{N}\}$ is an element in $T_D\mathcal{S}_2$. In fact, let $V_N=\{\tilde{v}_n=v_n,1\leq n\leq N\}\cup\{\tilde{v}_n=0,n>N\}$. The geodesic $\gamma_N(t)=\{x_n+t\tilde{v}_n,n\in\mathbb{N}\}$ is well-defined for $t\in [0,\frac{1}{2N}]$. For $1\leq N\leq M$,
\begin{align*}
    \cos\angle_D(V_N,V_M) &=\lim_{s,t\to 0}\frac{\mathrm{W}_2^2(\gamma_N(t),D)+\mathrm{W}_2^2(\gamma_M(s),D)-\mathrm{W}_2^2(\gamma_N(t),\gamma_M(s))}{2\mathrm{W}_2(\gamma_N(t),D)\mathrm{W}_2(\gamma_M(s),D)}\\
    &\geq \lim_{s,t\to 0}\frac{\dsum_{n=1}^N\frac{2t^2}{n^2}+\dsum_{n=1}^M\frac{2s^2}{n^2}-\dsum_{n=1}^N\frac{2(t-s)^2}{n^2}-\dsum_{n=N+1}^M\frac{2s^2}{n^2}}{2\sqrt{\dsum_{n=1}^N\frac{2t^2}{n^2}}\sqrt{\dsum_{n=1}^M\frac{2s^2}{n^2}}}= \frac{\sqrt{\dsum_{n=1}^N\frac{1}{n^2}}}{\sqrt{\dsum_{n=1}^M\frac{1}{n^2}}}
\end{align*}
Hence under the cone metric,
\begin{align*}
    \mathrm{C}_D^2(V_N,V_M) &= \|V_N\|^2+\|V_M\|^2-2\|V_N\|\|V_M\|\cos\angle_D(V_N,V_M)\\
    &\leq 2\dsum_{n=1}^N\frac{1}{n^2}+2\dsum_{n=1}^M\frac{1}{n^2}-2\sqrt{2\dsum_{n=1}^N\frac{1}{n^2}}\cdot\sqrt{2\dsum_{n=1}^M\frac{1}{n^2}}\cdot\frac{\sqrt{\dsum_{n=1}^N\frac{1}{n^2}}}{\sqrt{\dsum_{n=1}^M\frac{1}{n^2}}}\\
    &=2\dsum_{n=N+1}^M\frac{1}{n^2}\to 0,\quad  N,M\to\infty
\end{align*}
Therefore, $\{V_N\}_{N\in\mathbb{N}}$ is a Cauchy sequence in $T_D\mathcal{S}_2$ and converges to $V$. However, $V$ is not a tangent vector of any geodesic emanating from $D$ as for any fixed $t$, $x_n+tv_n\notin \Omega$ when $n$ is sufficiently large.  
\end{example}

\section{Groupings of Persistence Diagrams and their Fréchet Means}\label{sec:group}

Let $(\mathcal{M},d)$ be a metric space and $\mu$ be a (Borel) probability measure on $\mathcal{M}$. The \emph{Fréchet function} is defined by
\begin{equation*}
    F(x) = \int_{\mathcal{M}}d^2(x,y)\diff{\mu(y)}.
\end{equation*}
If $F(x)$ is finite for some (hence, every) $x$, the probability measure $\mu$ is said to have finite second moment. The quantity $\mathbb{V}=\inf_{x\in\mathcal{M}}F(x)$ is the {\em variance} of $\mu$. The set of points achieving the variance is the {\em Fréchet mean or expectation}. 

Fréchet means for sets of persistence diagrams exist, given that a probability measure on $(\mathcal{S}_2,\mathrm{W}_2)$ has finite second moment and compact support \citep{mileyko2011probability,turner2014frechet}. However, Fréchet means are not necessarily unique due to the nonnegative curvature of $(\mathcal{S}_2,\mathrm{W}_2)$. The lack of unique Fréchet means is problematic in many practical applications as well as theoretical settings---for example, averaging time-varying persistence diagrams \citep{munch2015probabilistic} and establishing convergence of empirical Fréchet mean of persistence diagrams \citep{cao2022approximating}---however, approximations for Fréchet means are computable.  

Let $D_1,\ldots,D_L$ be a finite set of persistence diagrams with finite off-diagonal points, and $\mu=\frac{1}{L}\sum_{i=1}^L\delta_{D_i}$ be a discrete probability measure. The Fréchet function for this set of persistence diagrams is 
\begin{equation}\label{eq:frechet-function}
    F(D) = \frac{1}{L}\dsum_{i=1}^L\mathrm{W}_2^2(D,D_i).
\end{equation}
\cite{turner2014frechet} proposed a greedy algorithm to compute local minima of the Fréchet function \eqref{eq:frechet-function}.  Other work by \cite {lacombe2018large} in more general contexts has also given rise to alternative algorithms to compute Fréchet means for persistence diagrams.  \cite{munch2015probabilistic} introduced probabilistic Fréchet means to average time-varying persistence diagrams. We now recall and rephrase some definitions and results from some of this prior work which will be useful for our study.

\begin{definition}
Let $D_1,\ldots,D_L$ be a finite set of $L$ persistence diagrams, each with $k_1,\ldots,k_L$ off-diagonal points. Let $K=k_1+\cdots+k_L$ be the total number of off-diagonal points. A {\em grouping} $G$ is a $K\times L$ formal matrix whose elements are off-diagonal points from $D_1,\ldots,D_L$ and copies of the diagonal $\partial\Omega$. Let $G^j$ be the $j$th column of $G$. It consists of all $k_j$ off-diagonal points of $D_j$ and $K-k_j$ copies of the diagonal $\partial\Omega$. Each row of $G$ is called a {\em selection}. A \emph{trivial selection} is a row with all $\partial\Omega$ entries.
\end{definition}

Intuitively, a grouping is a matching of points between persistence diagrams.  For the special case where $L=2$, a grouping is equivalent to a bijective matching between two persistence diagrams, with each selection representing the one-to-one correspondence between points. In general cases, a grouping is thus a representation of multi-matching, i.e., any two columns of the grouping induce a bijective matching between corresponding persistence diagrams.

\begin{remark}
    In the framework of optimal transport theory, a grouping can be understood as a collection of transport plans solving the multi-marginal transport problem: Let $\mathcal{M}_1,\ldots,\mathcal{M}_n$ be compact metric spaces equipped with Borel probability measures $\mu_1,\ldots,\mu_n$. Let $\mathcal{Z}$ be the target compact metric space. Suppose for each $i\in\{1,\ldots,n\}$ there is a continuous cost function $c_i:\mathcal{M}_i\times \mathcal{Z}\to\mathbb{R}$. The multi-marginal optimal transport problem seeks to find a probability measure $\nu$ on $\mathcal{Z}$ minimizing the following objective function:
$$
J(\nu) = \sum_{i=1}^n \mathrm{W}_{c_i}(\mu_i,\nu) = \sum_{i=1}^n \inf_{\pi_i\in\Pi(\mu_i,\nu)}\left\{\int_{\mathcal{M}_i\times \mathcal{Z}}c_i(x,z)\diff{\pi_i}(x,z)\right\}.
$$
It was shown in \cite{carlier2015numerical} that it suffices to find probability measures $\pi_i^*$ on $\mathcal{M}_i\times \mathcal{Z}$ such that
$$
(\pi_1^*,\ldots,\pi_n^*) =\mathop{\arg\min}_{\substack{(\pi_1,\ldots,\pi_n) \\ \in \Pi(\mu_1,\cdots,\mu_n)}}\sum_{i=1}^n \int_{\mathcal{M}_i\times \mathcal{Z}}c_i(x,z)\diff{\pi_i}(x,z)
$$
where $\Pi(\mu_1,\ldots,\mu_n)$ consists of tuples $(\pi_1,\ldots,\pi_n)$ such that for any Borel set $A_i\subseteq \mathcal{M}_i$ and $B\subseteq \mathcal{Z}$,
$$
\pi_i(A_i\times \mathcal{Z})=\mu_i(A_i),\quad \pi_1(\mathcal{M}_1\times B)=\cdots=\pi_n(\mathcal{M}_n\times B).
$$
Let $G$ be a grouping of the persistence diagrams $D_1,\ldots, D_L$. Each column of $G$ defines a discrete probability measure $\mu_i$ on $\mathbb{R}^2\cup\{\partial\Omega\}$. The mean persistence diagram $\mathrm{mean}(G)$ induces the target probability measure $\nu$. The rows of $G$ give transport plans $\pi_i$ from $\mu_i$ to $\nu$. Thus, the formulation of multi-marginal optimal transport coincides with a grouping for a finite set of persistence diagrams. 
\end{remark} 

For any $x\in\Omega$, let $x^\top$ be the projection to the diagonal, and $x^\bot=x-x^\top$. Set $\|x-\partial\Omega\|=\|x^\bot\|$ and $\|\partial\Omega-\partial\Omega\|=0$.  Let $Q=\{x_1,\ldots,x_L\}$ be a multiset of off-diagonal points and copies of the diagonal. If $Q\subseteq \Omega$ consists of off-diagonal points only, the mean point $\bar{Q}$ is the usual algorithmic mean $\bar{Q}=\frac{1}{L}\sum_{i=1}^Lx_i$. If $Q=\{x_1,\ldots,x_{s}\}\cup\{\partial\Omega,\ldots,\partial\Omega\}$ consists of $(L-s)$ copies of the diagonal, set $Q_{o} = \{x_1,\ldots,x_{s}\}$, and the mean point is then given by 
\begin{equation}\label{eq:mean-with-diagonal}
    \bar{Q}=\frac{s\bar{Q}_{o}+(L-s)(\bar{Q}_{o})^\top}{L}.
\end{equation}
If $Q=\{\partial\Omega,\ldots,\partial\Omega\}$, then $\bar{Q}=\partial\Omega$.

\begin{definition}
Let $\{D_1,\ldots,D_L\}$ be a set of persistence diagrams and $G$ be a grouping of size $K\times L$. The {\em mean persistence diagram} $\mathrm{mean}(G)$ is the diagram where each off-diagonal point is given by $\bar{G}_i$ for each nontrivial selection $G_i$. The {\em variance} of $G$ is defined as 
    \begin{equation}
        \mathbb{V}(G) = \frac{1}{L}\sum_{j=1}^L\sum_{i=1}^K\|G_i^j-\bar{G}_i\|^2.
    \end{equation}
\end{definition} 

The following theorem establishes the relation between Fréchet means and groupings of persistence diagrams.

\begin{theorem}{\cite[Theorem 3.3]{turner2014frechet}}\label{thm:turner}
Given a finite set of persistence diagrams $D_1,\ldots,D_L$, if $D_\star$ is a Fréchet mean then $D_\star=\mathrm{mean}(G_\star)$ for some grouping $G_\star$, and the optimal matching between $D_\star$ and each $D_i,i=1,\ldots,L$ is induced by $G_\star$. 
\end{theorem}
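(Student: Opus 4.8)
The plan is to start from a Fréchet mean $D_\star$ and reconstruct the grouping witness $G_\star$ directly from the optimal matchings between $D_\star$ and the $D_i$. First I would fix, for each $i = 1,\ldots,L$, an optimal matching $\phi_i : D_\star \to D_i$ achieving $\mathrm{W}_2(D_\star,D_i)$; such matchings exist because each $D_i$ (and $D_\star$) has finite total persistence, so the infimum in the definition of $\mathrm{W}_2$ is attained. Each off-diagonal point $x$ of $D_\star$ then gives a selection: the row whose $j$th entry is $\phi_j(x)$ if $\phi_j(x)$ is off-diagonal, and $\partial\Omega$ otherwise. Off-diagonal points $y$ of some $D_j$ that are left unmatched (i.e., sent to the diagonal by $\phi_j^{-1}$) contribute further selections in which the $j$th entry is $y$ and all other entries are $\partial\Omega$. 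Padding with trivial selections so that the total number of rows is $K = k_1 + \cdots + k_L$ yields a grouping $G_\star$ of the required size. By construction, the optimal matching between $D_\star$ and each $D_i$ is the one induced by $G_\star$, which is the second claim.

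The remaining work is to show $D_\star = \mathrm{mean}(G_\star)$, i.e., that each nontrivial selection $G_{\star,i}$ (corresponding to an off-diagonal point $x$ of $D_\star$) satisfies $x = \bar G_{\star,i}$. The key step is a variational argument: since $D_\star$ minimizes $F(D) = \frac{1}{L}\sum_i \mathrm{W}_2^2(D,D_i)$, and since for $D$ close to $D_\star$ the matching $\phi_i$ remains optimal (or at least feasible), the function
\begin{equation*}
    D \longmapsto \frac{1}{L}\sum_{i=1}^L \sum_{x \in D_\star} \|x' - \phi_i(x)\|^2,
\end{equation*}
where $x'$ denotes the point of $D$ tracked from $x$, is an upper bound for $F$ that agrees with $F$ at $D_\star$. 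Minimizing this quadratic surrogate over the positions of the off-diagonal points of $D$ (treating the diagonal entries as the projection $x^\top$, consistent with \eqref{eq:mean-with-diagonal}) forces each off-diagonal point of the minimizer to be the Euclidean average of the corresponding selection entries — exactly $\bar G_{\star,i}$. Because $D_\star$ is already a global minimizer of $F$ and $F \le $ surrogate with equality at $D_\star$, $D_\star$ must coincide with this surrogate minimizer, giving $x = \bar G_{\star,i}$ for every nontrivial selection. One must also check the edge cases: a selection consisting of a single off-diagonal point and $L-1$ copies of $\partial\Omega$ has mean $\frac{1}{L}(x_1 + (L-1)x_1^\top)$ by \eqref{eq:mean-with-diagonal}, and the surrogate-minimization in this case still returns that value, so no off-diagonal point of $D_\star$ can be ``wasted.''

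The main obstacle is handling the interaction between the combinatorial choice of matchings and the continuous optimization: when $D$ moves away from $D_\star$, the matching $\phi_i$ that was optimal for $D_\star$ need not stay optimal for $D$, so the surrogate is only a one-sided bound, and one must argue that this one-sidedness still pins down $D_\star$. The resolution is exactly that we only need the surrogate to dominate $F$ globally and match it at $D_\star$; then any point strictly improving the surrogate would strictly improve $F$, contradicting minimality of $D_\star$. A secondary technical point is ensuring the bookkeeping of diagonal points is consistent — that moving a point of $D_\star$ off the portion matched to a diagonal entry only increases the surrogate — which follows from $\|x - \partial\Omega\| = \|x^\bot\|$ being realized by the orthogonal projection $x^\top$. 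Once these are in place, the identity $D_\star = \mathrm{mean}(G_\star)$ follows, completing the proof.
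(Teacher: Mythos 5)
The paper itself does not prove this statement --- it is quoted from \cite[Theorem 3.3]{turner2014frechet} --- so there is no internal proof to compare against; I will assess your argument on its own terms. Your overall strategy (fix optimal matchings $\phi_i:D_\star\to D_i$, assemble the grouping row-by-row from the tuples $(\phi_1(x),\ldots,\phi_L(x))$ over off-diagonal $x\in D_\star$, then use the fact that the fixed-matching cost is a surrogate dominating $F$ and touching it at $D_\star$ to force each $x$ to sit at its selection mean) is sound and is essentially the argument of the cited reference. The domination logic is correct: $S\ge F$ everywhere on the parametrized family, $S(D_\star)=F(D_\star)=\min F$, hence $D_\star$ minimizes $S$; each selection summand is strictly convex with unique minimizer equal to the mean of \eqref{eq:mean-with-diagonal}.

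There is, however, one genuine gap, and it sits exactly in the rows you add for ``unmatched'' points. If some off-diagonal $y\in D_j$ is sent to the diagonal by $\phi_j^{-1}$, your construction creates a selection whose only off-diagonal entry is $y$, and its mean is $\frac{(L-1)y^\top+y}{L}$ --- an \emph{off-diagonal} point. Then $\mathrm{mean}(G_\star)$ contains an off-diagonal point with no counterpart in $D_\star$, so the identity $D_\star=\mathrm{mean}(G_\star)$ fails for the $G_\star$ you built. You must rule such rows out, and your surrogate as written cannot do it, because it only tracks off-diagonal points of $D_\star$ being moved, not diagonal points of $D_\star$ being pushed off the diagonal. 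The fix is a short perturbation: if such a $y$ existed, adding the point $z=\frac{(L-1)y^\top+y}{L}$ to $D_\star$, matched to $y$ in $D_j$ and to the diagonal in every other $D_i$, changes that contribution to the Fréchet function from $\frac{1}{L}\|y^\bot\|^2$ to $\frac{1}{L}\bigl(\|z-y\|^2+(L-1)\|z^\bot\|^2\bigr)=\frac{L-1}{L^2}\|y^\bot\|^2$, a strict decrease of $\frac{1}{L^2}\|y^\bot\|^2$, contradicting minimality of $D_\star$. Equivalently, extend your surrogate to track the diagonal points of $D_\star$ that receive the $y$'s; the same domination argument then forces those tracked points to sit at off-diagonal selection means, which is impossible. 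With that addition the proof closes. A minor point you gloss over: one should also record that a Fréchet mean of finitely many finite diagrams has only finitely many off-diagonal points, so that the row count $K$ and the attainment of the infimum defining $\mathrm{W}_2$ are unproblematic.
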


This result allows us to consider the Fréchet variance as the minimal variance of groupings, 
\begin{equation*}
    \sigma^2 = \min_{D} \frac{1}{L}\sum_{i=1}^L \mathrm{W}_2^2(D,D_i)\\
    = \min_{G} \mathbb{V}(G).
\end{equation*}
{\em Optimal groupings} are groupings that achieve the Fréchet variance. For a general grouping we derive the following variance expression.

\begin{figure}
    \centering
    \begin{tikzpicture}[scale=0.7]

    %make coordinate
    \draw[dashed] (-3,-1)--(7.5,-1);
    \draw[->] (-2,-1.5)--(-2,9);

    %boundary
    \filldraw[fill=green!10] (4,-1) -- (7,-1) -- (7,2) -- cycle;
    \draw[dashed] (2,-1) -- (7,4);

    %cluster 1
    \draw[dashed] (2,2) circle (1.5);
    \filldraw[fill=red] (1,2.5) circle (3pt);
    \filldraw[fill=blue] (3,1.5) circle (3pt);
    \filldraw[fill=cyan] (2,1) circle (3pt);
    \node[star, star points=5, star point ratio=2,fill=black, scale=0.5] at (2,1.67) {};
    \draw[densely dotted] (1,2.5) -- (2,1.67);
    \draw[densely dotted] (3,1.5) -- (2,1.67);
    \draw[densely dotted] (2,1) -- (2,1.67);

    %cluster 2
    \draw[dashed] (4.5,6.5) circle (1.5);
    \filldraw[fill=red] (4,6) circle (3pt);
    \filldraw[fill=blue] (5,7.5) circle (3pt);
    \filldraw[fill=cyan] (5.4,6) circle (3pt);
    \node[star, star points=5, star point ratio=2,fill=black, scale=0.5] at (4.8,6.5) {};
    \draw[densely dotted] (4,6) -- (4.8,6.5);
    \draw[densely dotted] (5,7.5) -- (4.8,6.5);
    \draw[densely dotted] (5.4,6) -- (4.8,6.5);

    %cluster 3
    \draw[dashed] (0,6) circle (1.5);
    \filldraw[fill=red] (0.5,6.5) circle (3pt);
    \filldraw[fill=blue] (-0.5,5.5) circle (3pt);
    \filldraw[fill=cyan] (0,7) circle (3pt);
    \node[star, star points=5, star point ratio=2,fill=black, scale=0.5] at (0,6.33) {};
    \draw[densely dotted] (0.5,6.5) -- (0,6.33);
    \draw[densely dotted] (-0.5,5.5) -- (0,6.33);
    \draw[densely dotted] (0,7) -- (0,6.33);

    \end{tikzpicture}
    \caption{An example of flat groupings. The off-diagonal points of $D_{\mathrm{red}},D_{\mathrm{blue}},D_{\mathrm{cyan}}$ are distributed as three clusters over the half-plane $\Omega$. Every dashed circle indicates a selection of the grouping. The Fréchet mean is given by $D_{\mathrm{black}}$.}
    \label{fig:well-separated}
\end{figure}

\begin{theorem}\label{thm:var-formula}
Let $\{D_1,\ldots,D_L\}$ be a set of persistence diagrams, and $G$ be a grouping of size $K\times L$. Let $s_i$ be the number of off-diagonal points in the $i$th row of $G$. The variance of $G$ is
\begin{equation}\label{eq:grouping-variance}
    \mathbb{V}(G) = \frac{1}{L^2}\sum_{i=1}^K\sum_{1\le w<\ell\le L}\|G_i^w-G_i^\ell\|^2+\sum_{i=1}^K\frac{L-s_i}{L^2s_i}\left(\sum_{1\le w<\ell\le s_i}\|(G_i^{j_w})^\top-(G_i^{j_\ell})^\top\|^2\right),
\end{equation}
where $G_i^{j_\ell},\ell=1,\ldots,s_i$ ranges over all off-diagonal points in the $i$th row of $G$. If $s_i=0$, the summand is taken to be $0$.
\end{theorem}

\begin{proof}
Let $G_i$ be the $i$th row with $s_i>0$ off-diagonal points $G_i^{j_1},\ldots,G_i^{j_{s_i}}$. Then the variance for $G_i$ is
\begin{equation}\label{eq:var-decomp}
    \mathbb{V}(G_i) = \frac{1}{L}\sum_{j=1}^L\|G_i^j-\bar{G}_i\|^2 
    %= \frac{1}{L}\bigg(\sum_{j=1}^{L}\|(G_i^{j})^\top-\bar{G}_i^\top\|^2+\sum_{j=1}^{L}\|(G_i^j)^\bot-\bar{G}_i^\bot\|\bigg)
    %
\end{equation}
Following \eqref{eq:mean-with-diagonal}, we have
\begin{equation*}
    \bar{G}_i^\top=\frac{1}{s_i}\sum_{\ell=1}^{s_i}(G_i^{j_\ell})^\top,\quad\bar{G}_i^\bot=\frac{1}{L}\sum_{\ell=1}^{s_i}(G_i^{j_\ell})^\bot
\end{equation*}
Recall that $\|G_i^j-\bar{G}_i\|=\|\bar{G}_i^\bot\|$ if $G_i^j=\partial\Omega$ is the diagonal. For notational convenience, we denote $\partial\Omega^\top := \bar{G}_i^\top$ and $\partial\Omega^\bot = 0$; then \eqref{eq:var-decomp} decomposes as
\begin{equation}\label{eq:var-decomp2}
    \mathbb{V}(G_i) = \frac{1}{L}\bigg(\sum_{\ell=1}^{s_i}\|(G_i^{j_\ell})^\top-\bar{G}_i^\top\|^2+\sum_{j=1}^{L}\|(G_i^j)^\bot-\bar{G}_i^\bot\|^2\bigg).
\end{equation}

Observe that given $a_1,\ldots,a_n\in\mathbb{R}$ and $\bar{a} = \frac{1}{n}\sum_{\ell=1}^na_\ell$, we have

\begin{equation*}
    \begin{aligned}
    \sum_{w,\ell}(a_w-a_\ell)^2 &= \sum_{w,\ell}((a_w-\bar{a})-(a_\ell-\bar{a}))^2\\
    &= n\sum_{w}(a_w-\bar{a})^2-2\sum_{w,\ell}(a_w-\bar{a})(a_\ell-\bar{a})+n\sum_{\ell}(a_\ell-\bar{a})^2\\
    &= 2n\sum_{\ell=1}^n(a_\ell-\bar{a})^2
    \end{aligned}
\end{equation*}
which implies $\frac{1}{n}(a_i-\bar{a})^2 = \displaystyle\sum_{1\le w<\ell\le n}(a_w-a_\ell)^2$. Using this identity, \eqref{eq:var-decomp2} becomes
\begin{align*}
    \mathbb{V}(G_i) =& \frac{1}{L}\bigg(\frac{1}{s_i}\sum_{1\le w<\ell\le s_i}\|(G_i^{j_w})^\top-(G_i^{j_\ell})^\top\|^2+\frac{1}{L}\sum_{1\le w\le\ell\le L}\|(G_i^w)^\bot-(G_i^\ell)^\bot\|^2\bigg)\\
    =& \frac{1}{L}\bigg(\left(\frac{1}{L}+\frac{1}{s_i}-\frac{1}{L}\right)\sum_{1\le w<\ell\le s_i}\|(G_i^{j_w})^\top-(G_i^{j_\ell})^\top\|^2+\frac{1}{L}\sum_{1\le w\le\ell\le L}\|(G_i^w)^\bot-(G_i^\ell)^\bot\|^2\bigg)\\
    =& \frac{1}{L^2}\bigg(\sum_{1\le w<\ell\le s_i}\|(G_i^{j_w})^\top-(G_i^{j_\ell})^\top\|^2+\sum_{1\le w\le\ell\le L}\|(G_i^w)^\bot-(G_i^\ell)^\bot\|^2\bigg) +\frac{L-s_i}{L^2s_i}\sum_{1\le w<\ell\le s_i}\|(G_i^{j_w})^\top-(G_i^{j_\ell})^\top\|^2\\
    =& \frac{1}{L^2}\sum_{1\le w<\ell\le L}\|G_i^w-G_i^\ell\|^2 + \frac{L-s_i}{L^2s_i}\sum_{1\le w<\ell\le s_i}\|(G_i^{j_w})^\top-(G_i^{j_\ell})^\top\|^2,
\end{align*}
where the last step follows from the same reasoning as \eqref{eq:var-decomp2}. Finally, summing $\mathbb{V}(G_i)$ for all rows, we obtain \eqref{eq:grouping-variance}.
\end{proof}

Notice that if we disregard the diagonal $\partial\Omega$ and suppose $G$ is a grouping of points in the plane $\mathbb{R}^2$, then the variance of $G$ only consists of the first term in \eqref{eq:grouping-variance}. The diagonal contributes the second term in the variance expression.

The derived variance expression motivates the following definition.
\begin{definition}\label{def:flat_grouping}
A grouping $G$ is called \emph{flat} if there exists $\lambda>0$ such that
\begin{enumerate}[(i)]
    \item For each nontrivial selection $G_i$, the diameter is bounded above by $\lambda$, i.e., $\|G_i^w-G_i^\ell\|<\lambda$ for all $w,\ell=1,\ldots,L$;
    \item For two distinct selections $G_i,G_j$, the distance between $G_i$ and $G_j$ is bounded below by $\lambda$, i.e., $\|G_i^w-G_j^\ell\|>\lambda$ for all $w,\ell=1,\ldots,L$;
    \item Off-diagonal points are bounded away from the diagonal by $\lambda$, i.e., $\|G_i^w-\partial\Omega\|>\lambda$ for $G_i^w\neq \partial\Omega$.
\end{enumerate} 
\end{definition}

A visual example of flatness is illustrated in Figure \ref{fig:well-separated}.

Given this notion of flatness, we now have a condition that gives rise to unique Fréchet means of persistence diagrams.

\begin{theorem}\label{thm:unique-mean}
Let $\{D_1,\ldots,D_L\}$ be a set of persistence diagrams. If there exists a flat grouping $G_\star$ for $\{D_1,\ldots,D_L\}$, then $\mathrm{mean}(G_\star)$ is the unique Fréchet mean of $\{D_1,\ldots,D_L\}$.
\end{theorem}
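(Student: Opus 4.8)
The plan is to reduce the statement, via Theorem~\ref{thm:turner}, to showing that $G_\star$ is (essentially) the \emph{unique} optimal grouping, and then to exploit the flatness parameter $\lambda$ through the variance formula~\eqref{eq:grouping-variance}. First I would record a structural consequence of flatness: every nontrivial selection of $G_\star$ must be \emph{full}, i.e.\ contain no diagonal entry. Indeed, if a nontrivial selection had an off-diagonal point $p$ in column $w$ and $\partial\Omega$ in column $w'$, then condition~(1) would force $\|p-\partial\Omega\|=\|p^\bot\|<\lambda$ while condition~(3) forces $\|p^\bot\|>\lambda$. Hence $s_i\in\{0,L\}$ for every row, the second (diagonal) term of~\eqref{eq:grouping-variance} vanishes for $G_\star$, and the nontrivial selections partition the off-diagonal points of $D_1,\dots,D_L$ into clusters $C_1,\dots,C_m$, each with exactly one point per diagram, such that intra-cluster distances are $<\lambda$, inter-cluster distances are $>\lambda$, and every cluster point is at distance $>\lambda$ from $\partial\Omega$.

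Next I would write, for an arbitrary grouping $G$, $\mathbb V(G)=A(G)+B(G)$, where $A(G)=\frac1{L^2}\sum_{1\le w<\ell\le L}\mathrm{cost}_G(w,\ell)$ with $\mathrm{cost}_G(w,\ell)=\sum_{i=1}^K\|G_i^w-G_i^\ell\|^2$ the cost of the bijection between $D_w$ and $D_\ell$ induced by $G$, and $B(G)\ge 0$ is the second term of~\eqref{eq:grouping-variance}. Since the induced bijection is a valid matching of diagrams, $\mathrm{cost}_G(w,\ell)\ge \mathrm W_2^2(D_w,D_\ell)$, giving the $G$-independent lower bound $A(G)\ge A_{\min}:=\frac1{L^2}\sum_{w<\ell}\mathrm W_2^2(D_w,D_\ell)$. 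The key lemma is then that, under flatness, for every pair $w,\ell$ the within-cluster bijection $M^{w\ell}_\star$ (pairing the cluster-$r$ point of $D_w$ with that of $D_\ell$) is the \emph{unique} optimal matching between $D_w$ and $D_\ell$. I would prove this by an exchange argument: in the usual encoding of diagram matchings that adds a diagonal copy of each off-diagonal point, any competing matching $M$ differs from $M^{w\ell}_\star$ along alternating cycles; on each cycle every $M$-edge is either a weight-$0$ slack edge or a cross-cluster/point-to-diagonal edge of squared length $>\lambda^2$, while every non-slack $M^{w\ell}_\star$-edge is intra-cluster of squared length $<\lambda^2$, and counting edges along the cycle (the slack edges cancel because $M^{w\ell}_\star$ matches every diagonal-copy vertex by a slack edge) shows the long $M$-edges are at least as numerous as the short $M^{w\ell}_\star$-edges, with a strict surplus as soon as the cycle carries a non-slack edge. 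Hence $\mathrm{cost}(M)>\mathrm{cost}(M^{w\ell}_\star)$ whenever $M$ differs from $M^{w\ell}_\star$ as a matching of diagrams. Since $G_\star$ induces exactly $M^{w\ell}_\star$ on each pair, $A(G_\star)=A_{\min}$, and with $B(G_\star)=0$ this yields $\mathbb V(G_\star)=A_{\min}=\min_G\mathbb V(G)=\sigma^2$; therefore $F(\mathrm{mean}(G_\star))\le \mathbb V(G_\star)=\sigma^2$, so $\mathrm{mean}(G_\star)$ is a Fréchet mean.

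For uniqueness I would invoke Theorem~\ref{thm:turner}: any Fréchet mean equals $\mathrm{mean}(G)$ for a grouping $G$ whose induced matchings to the $D_i$ are optimal, and for such $G$ one has $\mathbb V(G)=\sigma^2=A_{\min}$, forcing both $A(G)=A_{\min}$ and $B(G)=0$. Then $\mathrm{cost}_G(w,\ell)=\mathrm W_2^2(D_w,D_\ell)$ for all $w,\ell$, so by the key lemma $G$ induces $M^{w\ell}_\star$ on every pair; in particular no row of $G$ mixes off-diagonal and diagonal entries (such a row would match a point to the diagonal in some induced matching, contradicting optimality of $M^{w\ell}_\star$), so every nontrivial row of $G$ is full. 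Tracking, for each point of $D_1$, the partner it receives in every other column shows that the nontrivial rows of $G$ are precisely the clusters $C_1,\dots,C_m$, up to reordering and up to extra trivial rows. Consequently $\mathrm{mean}(G)=\mathrm{mean}(G_\star)$, which establishes that the Fréchet mean is unique and equals $\mathrm{mean}(G_\star)$.

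I expect the main obstacle to be the key lemma --- that flatness makes the within-cluster matching the \emph{unique} optimal matching between each pair of diagrams. The delicate parts are handling matchings that send points to the diagonal (which must be absorbed into the alternating-cycle bookkeeping via diagonal copies) and verifying that the edge count on each cycle genuinely forces the long $M$-edges to outnumber the short $M^{w\ell}_\star$-edges, so the inequality is strict exactly when the two matchings differ as matchings of diagrams. The remaining ingredients --- the structural consequences of flatness, the $A/B$ split, and the passage from pairwise optimality back to the clustering --- are bookkeeping with~\eqref{eq:grouping-variance} and Theorem~\ref{thm:turner}.
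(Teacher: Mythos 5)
Your proof is correct and rests on the same two pillars as the paper's argument: flatness forces every nontrivial selection to be full (conditions 1 and 3), so the diagonal term of \eqref{eq:grouping-variance} vanishes for $G_\star$ while bounding $\mathbb V(G)$ below by its first term for arbitrary $G$; and the $\lambda$-gap --- intra-cluster distances below $\lambda$, cross-cluster and point-to-diagonal distances above $\lambda$ --- forces the within-cluster pairing. Where you diverge is in how the gap is exploited. The paper never leaves the world of groupings: it permutes rows so that the $w$th columns of $G$ and $G_\star$ agree and compares $\|G_i^w-G_i^\ell\|^2$ against $\|(G_\star)_i^w-(G_\star)_i^\ell\|^2$ row by row; since the row indexing already aligns the terms, every deviating row costs more than $\lambda^2$ under $G$ and less than $\lambda^2$ (or exactly $0$) under $G_\star$, and $\mathbb V(G)>\mathbb V(G_\star)$ for $G\neq G_\star$ follows in one line with no exchange argument. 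You instead prove the stronger statement that the within-cluster bijection is the \emph{unique} $\mathrm W_2$-optimal matching between each pair $D_w,D_\ell$, which is what makes the alternating-cycle bookkeeping necessary: an arbitrary bijection's terms do not line up with those of $M_\star^{w\ell}$. Your edge count is sound (every off-diagonal vertex on a cycle lies on exactly one edge of each matching, non-slack $M_\star$-edges have two off-diagonal endpoints while non-slack $M$-edges have at least one, whence $v\ge u$), though note the strictness comes not from a surplus $v>u$ but from each long $M$-edge strictly exceeding $\lambda^2$ while each non-slack $M_\star$-edge is strictly below it, together with $u\ge 1$ whenever the cycle is not entirely slack. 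Your route costs more work but buys two things the paper's terser version does not: it makes the appeal to Theorem~\ref{thm:turner} explicit for both existence and uniqueness of the Fréchet mean, and it identifies $\min_G\mathbb V(G)$ with $\frac1{L^2}\sum_{w<\ell}\mathrm W_2^2(D_w,D_\ell)$ under flatness, a clean byproduct that the row-by-row comparison does not yield.
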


\begin{proof}
Suppose $G_\star$ is a flat grouping. By conditions (i) and (iii), each nontrivial selection of $G$ does not contain the diagonal. Thus, the variance of $G_\star$ is 
\begin{equation*}
    \mathbb{V}(G_\star)=\frac{1}{L^2}\sum_{i=1}\sum_{1\le w<\ell\le L}\|(G_\star)_i^w-(G_\star)_i^\ell\|^2.
\end{equation*}
Let $G$ be any grouping. By \eqref{eq:grouping-variance}, we have 
\begin{equation*}
    \mathbb{V}(G)\ge \frac{1}{L^2}\sum_{1\le w<\ell\le L}\sum_{i=1}\|G_i^w-G_i^\ell\|^2.
\end{equation*}
Now, fix any two columns $w$ and $\ell$. Without loss of generality, we may assume $(G_\star)_i^w=G_i^w$ (otherwise, we may apply row permutation to achieve this form). By conditions (ii) and (iii),
\begin{equation*}
    \min\{\|G_i^w-G_i^\ell\|,\|G_i^w-\partial\Omega\|\}>\lambda> \|G_i^w-(G_\star)_i^\ell\|
\end{equation*}
for any $G_i^\ell\neq (G_\star)_i^\ell$. Therefore, $\mathbb{V}(G)>\mathbb{V}(G_\star)$ if $G\neq G_\star$. Thus $\mathrm{mean}(G_\star)$ is the unique Fréchet mean. 
\end{proof}

Notice that although the flat groupings are bounded away from the diagonal, proving uniqueness nevertheless requires the consideration of all possible groupings where the diagonal is involved. Thus Theorem \ref{thm:var-formula} is required here in the proof of Theorem \ref{thm:unique-mean}.

\begin{figure}
    \centering
    \begin{subfigure}{0.45\textwidth}
    \centering
    \begin{tikzpicture}[scale=0.75]
    %make coordinate
    \draw[->] (-0.5,0)--(7,0);
    \draw[->] (0,-0.5)--(0,7);

    %boundary
    \filldraw[fill=green!10] (0,0) -- (6.5,0) -- (6.5,6.5) -- cycle;

    %circle diagram
    \filldraw[fill=red] (1,4) circle (3pt);
    \filldraw[fill=red] (3,6) circle (3pt);

    %square diagram
    \filldraw ([xshift=-3pt,yshift=-3pt]1,6) rectangle ++(6pt,6pt);
    \filldraw ([xshift=-3pt,yshift=-3pt]3,4) rectangle ++(6pt,6pt);

    %grouping one
    \draw[gray, thick] (1,4) -- (3,4);
    \draw[gray, thick] (1,6) -- (3,6);

    %grouping two
    \draw[cyan, dotted, thick] (1,4) -- (1,6);
    \draw[cyan, dotted, thick] (3,4) -- (3,6);
    \end{tikzpicture}
    \end{subfigure}
    \hfill
    \begin{subfigure}{0.45\textwidth}
    \centering
    \begin{tikzpicture}[scale=0.75]
    \newcommand{\annL}[6] [] {
    \coordinate (AE1) at ($(#2)!#4!90:(#3)$);
    \coordinate (BE1) at ($(#3)!#4!-90:(#2)$);
    \coordinate (AE2) at ($(#2)!#5!90:(#3)$);
    \coordinate (BE2) at ($(#3)!#5!-90:(#2)$);
    \draw[>=latex,red,<->]  (AE2) -- (BE2) node[midway,sloped,below,align=center] {#6};
    \draw[very thin,shorten >=1pt,shorten <=1pt] (#2) -- (AE1);
    \draw[very thin,shorten >=1pt,shorten <=1pt] (#3) -- (BE1);
    }
    \newcommand{\annC}[6] [] {
    \coordinate (AE1) at ($(#2)!#4!90:(#3)$);
    \coordinate (BE1) at ($(#3)!#4!-90:(#2)$);
    \draw [decorate,decoration={brace,amplitude=#5},xshift=0pt,yshift=0pt]
          (AE1) -- (BE1) node [black,midway,sloped,above,yshift=#5]  {#6} ;
    }

    %make coordinate
    \draw[->] (-0.5,0)--(7,0);
    \draw[->] (0,-0.5)--(0,7);

    %boundary
    \filldraw[fill=green!10] (0,0) -- (6.5,0) -- (6.5,6.5) -- cycle;

    %circle diagram
    \filldraw[fill=red] (1,2) circle (3pt);
    \filldraw[fill=red] (4,5) circle (3pt);

    %square diagram
    \filldraw ([xshift=-3pt,yshift=-3pt]2,3) rectangle ++(6pt,6pt);
    \filldraw ([xshift=-3pt,yshift=-3pt]5,6) rectangle ++(6pt,6pt);

    %grouping one
    \draw[gray, thick] (1,2) -- (2,3);
    \draw[gray, thick] (4,5) -- (5,6);

    %annotation
    \coordinate (O) at (1,2);
    \coordinate (A) at (2,3);
    \coordinate (B) at (4,5);
       \draw[cyan,dotted,thick] (O) -- (A);
    \annC{O}{A}{1mm}{3mm}{$\lambda-\epsilon$};
    \draw[cyan,dotted,thick] (A) -- (B);
    \annC{A}{B}{1mm}{3mm}{$\lambda+\epsilon$};
    \coordinate (C) at (4.5,4.5);
    \draw[cyan,dotted,thick] (B) -- (C);
    \annC{B}{C}{1mm}{0.5mm}{};
    \node[right] at (4.4,5.1) {$\epsilon$};
    %\annL{B}{C}{1mm}{0.1mm}{$<\lambda$};

    \end{tikzpicture}
    \end{subfigure}
    \caption{Counterexamples violating the conditions of flat groupings (Definition \ref{def:flat_grouping}). On the left panel, a grouping for two persistence diagrams $D_{\mathrm{red}},D_{\mathrm{black}}$ is depicted by solid lines. Four off-diagonal points form the corners of a square, hence the grouping violates conditions (i) and (ii). The Fréchet mean is not unique as the grouping depicted by dotted lines gives another Fréchet mean. On the right panel, a grouping is given by the solid lines. Let $\epsilon>0$ be a small positive number. The grouping satisfies condition (i) in Definition \ref{def:flat_grouping} because each selection has diameter $\lambda-\epsilon$. The dotted line parallel to the diagonal shows the distance between two selections is $\lambda+\epsilon$. Thus the grouping satisfies condition (ii) of Definition \ref{def:flat_grouping}. The dotted line orthogonal to the diagonal shows that the distances of off-diagonal points to the diagonal are sufficiently smaller than $\lambda$. Thus the grouping violates condition (iii) of Definition \ref{def:flat_grouping}. In this case, the mean of the grouping is not a Fréchet mean, since the optimal grouping here will match all off-diagonal points with the diagonal and the Fréchet mean is given by the intermediate points between off-diagonal points and the diagonal.} 
    \label{fig:nonminimal}
\end{figure}

If there exists a flat grouping for $D_1,\ldots,D_L$, then the off-diagonal points are distributed as several clusters over the half-plane $\Omega$; see Figure \ref{fig:well-separated}. Though flat groupings are special, there are counterexamples if we drop any one of the three conditions; see Figure \ref{fig:nonminimal}. 

In terms of computation, we remark that flatness has implications on the computability of Fréchet means of persistence diagrams.  In \cite{turner2014frechet}, a greedy algorithm was introduced to compute the Fréchet mean of a set of $m$ persistence diagrams $D_1,\ldots,D_m$. If the algorithm initializes at some $D_i$, then the algorithm will converge globally in one step. If it starts at the midway point between two of the $m$ persistence diagrams, or if it starts with the mean persistence diagram of a random grouping, then it may not converge globally: Suppose $D_1=\{(1,10),(1,12)\}$ and $D_2=\{(2,10),(2,12)\}$ are two persistence diagrams. The flat grouping is given by
        $$
        \begin{bmatrix}
            (1,10) & (2,10)\\
            (1,12) & (2,12)
        \end{bmatrix}.
        $$
        However, if we start with grouping 
        $$
        \begin{bmatrix}
            (1,10) & (2,12)\\
            (1,12) & (2,10)
        \end{bmatrix}
        $$
        whose mean persistence diagram is $\{(1.5,11),\,(1.5,11)\}$, then the grouping remains the same in the following iterations and the algorithm will not output the true Fréchet mean.

We now turn to discussing convergence of Fréchet means for persistence diagrams. 
 For $\rho=\frac{1}{L}\sum_{i=1}^L\delta_{D_i}$ a discrete probability measure supported on $\{D_1,\ldots,D_L\}$ and $D'_1,\ldots,D'_B$ i.i.d.~samples drawn from $\rho$, \cite{turner2014frechet} proved that if the Fréchet mean for $\rho$ is unique, then with probability one, the empirical Fréchet mean converges to the population Fréchet mean under the Hausdorff distance, which is, to date, the only convergence result for Fréchet means of sets of persistence diagrams.  

A finite sample convergence rate is of practical importance in paving the way to establishing the Fréchet mean as a viable tool with theoretical guarantees in important computational settings, such as those discussed by \cite{cao2022approximating} where the goal is to find an appropriate representation to approximate the true persistent homology of a very large, yet finite, dataset. Using the language of groupings, we can derive a general and simple finite sample convergence rate, which reduces to the convergence for Fréchet means as a special case when the grouping is flat. 

\begin{theorem}
Let $\bm{\rho}=\frac{1}{L}\sum_{i=1}^L\delta_{D_i}$ be a discrete probability measure on $\mathcal{S}_2$, and $D_1',\ldots,D_B'$ be i.i.d.~samples from $\bm{\rho}$. Let $\mathbf{G}$ be a grouping for $D_1,\ldots,D_L$, and $D_\star=\mathrm{mean}(\mathbf{G})$ be the population mean persistence diagram. Let $G$ be the induced grouping for $D_1',\ldots,D_B'$, with each column $G^j$ being the corresponding column of $D_j'$ in $\mathbf{G}$, and $\bar{D} = \mathrm{mean}(G')$ be the empirical mean persistence diagram. We have
\begin{equation}\label{eq:rate-grouping}
    \mathbb{E}[\mathrm{W}_2^2(\bar{D},D_\star)]\le \frac{\sigma^2}{B}
\end{equation}
where $\sigma^2 = \mathbb{V}(\mathbf{G})$ is the variance of the grouping $\mathbf{G}$.
\end{theorem}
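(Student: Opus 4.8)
The plan is to upper bound $\mathrm{W}_2^2(\bar D, D_\star)$ by the transport cost of the bijection induced by the common row-structure of the two groupings, and then to recognise that cost, row by row, as the squared deviation of an empirical mean of i.i.d.\ points from its population mean; the bound $\sigma^2/B$ then follows from the elementary fact that a sample mean of $B$ i.i.d.\ observations has variance $1/B$ times the population variance. Concretely, write $D_b' = D_{\iota(b)}$ for i.i.d.\ uniform indices $\iota(b)\in\{1,\dots,L\}$, so that the $b$-th column of $G$ is the $\iota(b)$-th column of $\mathbf{G}$. Both $D_\star=\mathrm{mean}(\mathbf{G})$ and $\bar D=\mathrm{mean}(G)$ carry one off-diagonal point per nontrivial selection, and every selection nontrivial in $G$ is nontrivial in $\mathbf{G}$ (its off-diagonal entries are inherited). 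So I would pair the point $\bar G_i$ of $\bar D$ with the point $\bar{\mathbf{G}}_i$ of $D_\star$ whenever both exist, match $\bar{\mathbf{G}}_i$ to the diagonal when selection $i$ is nontrivial in $\mathbf{G}$ but trivial in $G$, and match remaining diagonal points to diagonal points. This is a bijection between $\bar D$ and $D_\star$, hence
\[
\mathrm{W}_2^2(\bar D, D_\star) \;\le\; \sum_{i \,:\, \mathbf{G}_i \text{ nontrivial}} c_i,
\qquad
c_i = \begin{cases} \|\bar G_i - \bar{\mathbf{G}}_i\|^2, & G_i \text{ nontrivial},\\ \|\bar{\mathbf{G}}_i^{\bot}\|^2, & G_i \text{ trivial}. \end{cases}
\]

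Next I would decompose each point into its component $\bot$ perpendicular to the diagonal and its component $\top$ along it. For a fixed selection $i$ of $\mathbf{G}$ with off-diagonal entries indexed by a set $J_i$ of size $s_i$, the mean formula \eqref{eq:mean-with-diagonal} gives $\bar{\mathbf{G}}_i^{\bot}=\frac1L\sum_{j\in J_i}(\mathbf{G}_i^j)^{\bot}$ and $\bar{\mathbf{G}}_i^{\top}=\frac1{s_i}\sum_{j\in J_i}(\mathbf{G}_i^j)^{\top}$, with the analogous formulas for $\bar G_i$ using $B$ and the random number of sampled columns landing in $J_i$. The perpendicular part is straightforward: $\bar G_i^{\bot}=\frac1B\sum_{b=1}^B Y_b$, where $Y_b=(\mathbf{G}_i^{\iota(b)})^{\bot}$ (taken to be $0$ on diagonal entries) is i.i.d.\ with mean $\bar{\mathbf{G}}_i^{\bot}$, so $\mathbb{E}\|\bar G_i^{\bot}-\bar{\mathbf{G}}_i^{\bot}\|^2=\frac1B\cdot\frac1L\sum_{j}\|(\mathbf{G}_i^j)^{\bot}-\bar{\mathbf{G}}_i^{\bot}\|^2$ (diagonal entries contributing $\|\bar{\mathbf{G}}_i^{\bot}\|^2$ each). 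Summing over all $i$, and using the pairwise-distance identity from the variance-formula proof, this accounts for exactly $\frac1B$ times the first term of \eqref{eq:grouping-variance} together with all the diagonal-to-mean contributions.

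The crux is the parallel part, and this is where the diagonal makes the argument delicate: one must show $\mathbb{E}\big[\mathbbm{1}_{\{G_i \text{ nontrivial}\}}\|\bar G_i^{\top}-\bar{\mathbf{G}}_i^{\top}\|^2\big]\le \frac1B\cdot\frac1L\sum_{j\in J_i}\|(\mathbf{G}_i^j)^{\top}-\bar{\mathbf{G}}_i^{\top}\|^2$, whose sum over $i$ produces the second term of \eqref{eq:grouping-variance}. Conditioned on the number $s_i'$ of sampled columns falling in $J_i$, the point $\bar G_i^{\top}$ is the mean of $s_i'$ i.i.d.\ draws of $(\mathbf{G}_i^j)^{\top}$ with $j$ uniform on $J_i$, so its conditional squared deviation has expectation (diagonal-direction variance)$/s_i'$; since $s_i'\sim\mathrm{Binomial}(B,s_i/L)$ can be as small as $1$, this conditional term can a priori be much larger than the target $\frac{s_i}{BL}\cdot(\text{diagonal-direction variance})$. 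The resolution is to exploit that $\mathrm{W}_2$ is an infimum over matchings: when $\bar G_i$ has drifted far in the diagonal direction it costs less to match both $\bar G_i$ and $\bar{\mathbf{G}}_i$ to the diagonal, so $c_i$ must be replaced by $\min\{\|\bar G_i-\bar{\mathbf{G}}_i\|^2,\ \|\bar G_i^{\bot}\|^2+\|\bar{\mathbf{G}}_i^{\bot}\|^2\}$ before taking expectations, and one must check that this truncation absorbs the binomial fluctuations of $s_i'$. Balancing this $\min$ against those fluctuations is the main obstacle; once it is in hand, combining it with the perpendicular estimate and summing over nontrivial selections yields $\mathbb{E}[\mathrm{W}_2^2(\bar D, D_\star)]\le\frac1B\mathbb{V}(\mathbf{G})=\sigma^2/B$. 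When $\mathbf{G}$ is flat there are no diagonal entries in any nontrivial selection, so $s_i=L$, $s_i'=B$ deterministically, the parallel term is already the exact i.i.d.\ variance, and no truncation is needed — which is why the flat case reduces cleanly to the Euclidean computation and thereby gives convergence of the empirical Fréchet mean.
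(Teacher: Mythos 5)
Your setup (the row-induced matching, the $\top$/$\bot$ decomposition, and the exact computation for the perpendicular component) is the same as the paper's. But the step you flag as ``the main obstacle'' is a genuine gap, and it cannot be closed, because the inequality you are trying to prove for the parallel term is false. Conditioned on the number $m_i\ge 1$ of sampled columns landing in $J_i$, one has $\mathbb{E}\bigl[\|\bar G_i^{\top}-\bar{\mathbf{G}}_i^{\top}\|^2\mid m_i\bigr]=v_i/m_i$ with $v_i=\frac{1}{s_i}\sum_{j\in J_i}\|(\mathbf{G}_i^j)^{\top}-\bar{\mathbf{G}}_i^{\top}\|^2$, and $\mathbb{E}[\mathbbm{1}_{\{m_i\ge1\}}/m_i]$ is of order $\frac{L}{s_iB}$, whereas the target $\frac{1}{B}\cdot\frac{1}{L}\sum_{j\in J_i}\|(\mathbf{G}_i^j)^{\top}-\bar{\mathbf{G}}_i^{\top}\|^2$ equals $\frac{s_i}{LB}v_i$; the discrepancy is a factor of roughly $(L/s_i)^2$. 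Your proposed rescue via the $\min$ with the diagonal matching only activates on the event that $\bar G_i$ drifts by an amount comparable to its distance from the diagonal, which for points far from the diagonal is exponentially rare and so recovers nothing of order $1/B$.

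In fact the theorem fails for groupings whose nontrivial selections mix off-diagonal entries with the diagonal. Take $L=3$, $D_3=D_\emptyset$, and $D_1,D_2$ each a single point at perpendicular height $100$ above the diagonal with diagonal coordinates $\mp1$, grouped into one nontrivial selection. Then $\mathbb{V}(\mathbf{G})=\frac{20006}{9}$, and for $B=2$ a direct enumeration of the nine sample outcomes gives $\mathbb{E}[\mathrm{W}_2^2(\bar D,D_\star)]=\frac{10006}{9}>\frac{10003}{9}=\sigma^2/B$. You should also know that the paper's own proof has exactly the gap you diagnosed: it asserts $\sum_{k=1}^B\mathbb{P}(m_i=k)/k\le\frac1B$, which holds only when $\mathbb{P}(m_i=B)=1$, i.e.\ when $s_i=L$. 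Your closing observation is therefore the correct resolution: when every nontrivial selection of $\mathbf{G}$ contains no diagonal entries (in particular when $\mathbf{G}$ is flat), $m_i=B$ deterministically, both components are exact i.i.d.\ sample-mean variances, and the bound $\sigma^2/B$ holds; the statement should be restricted to that case, which is all that is needed for the corollary on Fréchet means.
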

\begin{proof}
Since the rows of the grouping $\mathbf{G}$ give a natural bijection between $\bar{D}$ and $D_\star$, we have
$$
\mathbb{E}[\mathrm{W}_2^2(\bar{D},D_\star)]\le \sum_{i=1}^K\mathbb{E}[\|\bar{G}_i-\bar{\mathbf{G}}_i\|^2].
$$
It suffices to consider row-wise variances. Suppose there are $m_i$ off-diagonal points for the $i$th row $G_i$. Then
\begin{align}
    \mathbb{E}[\|\bar{G}_i-\bar{\mathbf{G}}_i\|^2] =&{} \mathbb{E}[\|\bar{G}_i^\top-\bar{\mathbf{G}}_i^\top\|^2]+ \mathbb{E}[[\|\bar{G}_i^\bot-\bar{\mathbf{G}}_i^\bot\|^2]] \nonumber\\
    =&{} \mathbb{E}\left[\bigg\|\frac{1}{m_i}\sum_{\ell=1}^{m_i}(G_i^{j_\ell})^\top-\bar{\mathbf{G}}_i^\top\bigg\|^2\right]\label{eq:var-g-1}\\
    &+\mathbb{E}\left[\bigg\|\frac{1}{B}\sum_{\ell=1}^{m_i}(G_i^{j_\ell})^\bot-\bar{\mathbf{G}}_i^\bot\bigg\|^2\right]. \label{eq:var-g-2}
\end{align}
Suppose there are $s_i$ off-diagonal points in $\mathbf{G}_i$. Set $p_i=\frac{s_i}{L}$. Note that $m_i$ can be regarded as a random variable with binomial distribution $m_i\sim \mathrm{Binom}(B,p_i)$. Then we compute 
\begin{equation*}
\begin{aligned}
    \eqref{eq:var-g-1} &= \sum_{k=0}^{B}\mathbb{E}\left[\bigg\|\frac{1}{m_i}\sum_{\ell=1}^{m_i}(G_i^{j_\ell})^\top-\bar{\mathbf{G}}_i^\top\bigg\|^2\bigg|\, m_i=k\right]\mathbb{P}(m_i=k)\\
    &= \sum_{k=1}^{B}\frac{\mathbb{E}[\|(G_i^{j_\ell})^\top-\mathbf{G}_i^\top\|^2]}{k}\mathbb{P}(m_i=k)\\
    &= \bigg(\frac{1}{L}\sum_{\ell=1}^{s_i}\|(G_i^{j_\ell})^\top-\mathbf{G}_i^\top\|^2\bigg)\bigg(\sum_{k=1}^B\frac{\mathbb{P}(m_i=k)}{k}\bigg)\\
    &\le \frac{1}{B}\cdot\bigg(\frac{1}{L}\sum_{\ell=1}^{s_i}\|(G_i^{j_\ell})^\top-\mathbf{G}_i^\top\|^2\bigg)
\end{aligned}
\end{equation*}
where we set the summand to be 0 when $m_i=0$, which corresponds to the degenerate case where all points lie on the diagonal. For \eqref{eq:var-g-2}, by appending $(\partial\Omega)^\bot=0$ to the expression of $\bar{G}_i^\bot$, we have  
\begin{equation*}
\begin{aligned}
    \eqref{eq:var-g-2} &= \mathbb{E}\left[\bigg\|\frac{1}{B}\sum_{\ell=1}^{B}(G_i^{\ell})^\bot-\bar{\mathbf{G}}_i^\bot\bigg\|^2\right]=\frac{1}{B}\mathbb{E}[\|(G_i^{\ell})^\bot-\bar{\mathbf{G}}_i^\bot\|^2]\\
    &=\frac{1}{B}\cdot\bigg(\frac{1}{L}\sum_{\ell=1}^{B}\|(G_i^{\ell})^\bot-\mathbf{G}_i^\bot\|^2\bigg).
\end{aligned}
\end{equation*}
%Substituting \eqref{eq:star} \antheacomment{Use the environment align instead of aligned nested inside equation so that you can label the appropriate step in the computation that substitutes into \eqref{eq:var-grouping}} and \eqref{eq:two-star} into \eqref{eq:var-grouping},
Therefore we have
\begin{equation*}
    \mathbb{E}[\|\bar{G}_i-\bar{\mathbf{G}}_i\|^2]\le \frac{1}{B}\mathbb{V}(\mathbf{G}_i).
\end{equation*}
Summing over all rows, we obtain \eqref{eq:rate-grouping}.
\end{proof}

With a guarantee of uniqueness of Fréchet means for sets of persistence diagrams given by Theorem \ref{thm:unique-mean}, the finite sample convergence rate applies for the empirical Fréchet mean for sets of persistence diagrams exhibiting flat groupings. 

\begin{corollary}
Suppose $\mathbf{G}$ is a flat grouping. Then $D_\star$ is the population Fréchet mean for $D_1,\ldots,D_L$ and $\bar{D}$ is the empirical Fréchet mean for $D'_1,\ldots,D_B'$, and $\mathbb{E}[\mathrm{W}_2^2(\bar{D},D_\star)]\le \frac{\sigma^2}{B}$.
\end{corollary}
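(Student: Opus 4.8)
The plan is to obtain the corollary by stitching together two results already established: Theorem~\ref{thm:unique-mean}, which identifies the mean of a flat grouping as the \emph{unique} Fréchet mean, and the preceding finite-sample theorem \eqref{eq:rate-grouping}, which gives the bound $\mathbb{E}[\mathrm{W}_2^2(\bar D,D_\star)]\le\sigma^2/B$ for the means of an arbitrary grouping and its induced empirical grouping, with $\sigma^2=\mathbb{V}(\mathbf{G})$. The only gap to close is to upgrade ``mean persistence diagram'' to ``Fréchet mean'' on both the population and the empirical side; once that is done, the inequality is a verbatim citation of \eqref{eq:rate-grouping}.

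On the population side there is nothing to prove: $\mathbf{G}$ is by hypothesis a flat grouping for $\{D_1,\dots,D_L\}$, so Theorem~\ref{thm:unique-mean} applies directly and $D_\star=\mathrm{mean}(\mathbf{G})$ is the unique Fréchet mean of $\{D_1,\dots,D_L\}$.

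On the empirical side, I would first observe that the induced grouping $G$ for $D_1',\dots,D_B'$ is again flat, with the same constant $\lambda$. Indeed, by construction each column of $G$ is a copy of one of the columns of $\mathbf{G}$ and the row structure is preserved, so every entry $G_i^w$ equals $\mathbf{G}_i^{c}$ for some $c\in\{1,\dots,L\}$; hence the off-diagonal entries of $G$ form a sub-multiset of those of $\mathbf{G}$, distributed among the same rows. Each of the three flatness inequalities — selection diameter $<\lambda$, inter-selection distance $>\lambda$, and distance $>\lambda$ from the diagonal — is therefore inherited directly from $\mathbf{G}$. The only new phenomenon is that a row of $\mathbf{G}$ may go unrepresented in the sample, which merely turns a nontrivial selection into a trivial one (and can simply be discarded), leaving flatness intact and not altering $\mathrm{mean}(G)$. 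Applying Theorem~\ref{thm:unique-mean} to $\{D_1',\dots,D_B'\}$ (with $B$ in the role of $L$), we conclude that $\bar D=\mathrm{mean}(G)$ is the unique empirical Fréchet mean.

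With both $D_\star$ and $\bar D$ now identified as genuine Fréchet means, the estimate is exactly \eqref{eq:rate-grouping} applied to $\mathbf{G}$, which yields $\mathbb{E}[\mathrm{W}_2^2(\bar D,D_\star)]\le\sigma^2/B$. The only step requiring a moment of care — and the nearest thing to an obstacle — is the bookkeeping around trivial selections: one must confirm that dropping unrepresented rows neither violates any of conditions 1--3 nor changes $\mathrm{mean}(G)$, and that the row-wise bijection between $\bar D$ and $D_\star$ underlying \eqref{eq:rate-grouping} is compatible with this reduction. No analytic work beyond this verification is needed.
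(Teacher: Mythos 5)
Your proposal is correct and matches the paper's (implicit) argument exactly: the corollary is obtained by combining Theorem~\ref{thm:unique-mean} with the finite-sample bound \eqref{eq:rate-grouping}, and your verification that the induced grouping on the samples is again flat is the right supporting observation. The one piece of bookkeeping you flag is actually vacuous: conditions 1 and 3 together force every nontrivial selection of a flat grouping to contain no diagonal entries, so every nontrivial row is represented in all $L$ columns and hence in every sample, and no row of the induced grouping can become trivial.
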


Notice that the computations of \eqref{eq:var-g-1} and \eqref{eq:var-g-2} reduce to straightforward vector calculations for flat groupings as $m_i$ is no longer involved as a random variable.  

\section{Alexandrov Geometry of Flat Groupings}
\label{sec:conv}

Recently, \cite{le2022fast} established a general theory on the convergence rate of empirical Fréchet means in Alexandrov spaces with curvature bounded from below, which is the geometric characterization of the space $(\mathcal{S}_2, \mathrm{W}_2)$. However, there are two obstacles to satisfying the assumptions of \cite{le2022fast} in our setting: (i) prior knowledge of whether a given point is Fréchet mean or not; and (ii) a positive lower bound on extensions of geodesics emanating from a Fréchet mean.  This prevents the direct application of the theory by \cite{le2022fast} to understanding the convergence behavior of empirical Fréchet means on $(\mathcal{S}_2, \mathrm{W}_2)$ and we are restricted to the case of flat groupings for convergence behavior.  Nevertheless, it is possible to reconstruct results inspired by a minimal background from \cite{le2022fast} towards a better understanding of the Alexandrov geometry of Fréchet means of persistence diagrams, which is the goal of this section.

\subsection{Metric Properties of Fréchet Means of Persistence Diagrams}

Let $(\mathcal{M},d)$ be a geodesic space. Given two tangent vectors $[u,s],\, [v,t]\in T_z\mathcal{M}$, $[u,s]$ is said to be \emph{opposite} to $[v,t]$ if $s=t=0$ or $s=t\neq 0$ and $\angle_z(u,v)=\pi$. Define
\begin{equation*}
    H_z\mathcal{M} := \{[u,s]\in T_z\mathcal{M} \mid \exists \ [v,t]\in T_z\mathcal{M} \text{ opposite to } [u,s] \}.
\end{equation*}
Let $o_z=[v,0]$ be the tip of the tangent cone. Note that $o_z\in H_z\mathcal{M}$, thus $H_z\mathcal{M}$ is nonempty.
\cite{alexander2022alexandrov} show that $H_z\mathcal{M}$ with the inherited cone metric is in fact a Hilbert space when $\mathcal{M}$ is an Alexandrov space with nonnegative curvature. $H_z\mathcal{M}$ is referred to as the \emph{Hilbert subcone} of the tangent cone at $z$.

Let $\log_z$ be the log map at $z$. Suppose $\log_z(x)=[v_z^x,d(z,x)]$ and $\log_z(y)=[v_z^y,d(z,y)]$. Denote $\langle \log_z(x),\, \log_z(y)\rangle_{z}:=d(z,x)d(z,y)\cos\angle(v_z^x,v_z^y)$.
Let $\mu$ be a probability measure on $(\mathcal{M},d)$ with finite second moment and $z_\star$ be a Fréchet mean of $\mu$. The tangent cone at $z_\star$ then exhibits the following properties.

\begin{theorem}{\cite[Theorem 7]{le2022fast}}\label{thm:le-fast}
Let $(\mathcal{M},d)$ be an Alexandrov space with nonnegative curvature. Then
\begin{enumerate}[(i)]
    \item At $z_\star$, the following equality holds
    \begin{equation}\label{eq:char-barycenter}
        \iint \langle \log_{z_\star}(x),\log_{z_\star}(y)\rangle_{z_\star}\,\mathrm{d}\mu(x)\mathrm{d}\mu(y)=0;
    \end{equation}
    \item The Hilbert subcone at $z_\star$ satisfies $\log_{z_\star}(\mathrm{supp}(\mu))\subseteq H_{\star}\mathcal{M}$;
    \item For any probability measure $\nu$ with finite second moment and $\log_{z_\star}(\mathrm{supp}(\nu))\subseteq H_{z_\star}\mathcal{M}$, and any $y\in\mathcal{M}$,
    \begin{equation}\label{eq:int-hilbert-cone}
        \int_{\mathcal{M}}\langle \log_{z_\star}(x),\, \log_{z_\star}(y)\rangle_{z_\star}\diff{\nu}(x) = \left\langle \int_{H_{z_\star}\mathcal{M}} u \diff{\nu_{\#}}(u),\log_{z_\star}(y)\right\rangle_{z_\star},
    \end{equation}
    where $\nu_\# = (\log_{z_\star})_\#(\nu)$ is the pushforward measure on $H_{z_\star}\mathcal{M}$.
\end{enumerate}
\end{theorem}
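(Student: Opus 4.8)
The plan is to establish the three assertions in the order (2), (3), (1): assertion (1) will be deduced by feeding $\mu$ itself into (3), which is legitimate precisely because of (2). Throughout, I would use that $\log_{z_\star}$ can be taken Borel measurable (as recalled above), that $\|\log_{z_\star}(x)\|_{z_\star}=d(z_\star,x)$, and that the finite second moments of the measures involved supply all the integrability needed.

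Assertion (2) is the main obstacle. I want to show that for every $x\in\mathrm{supp}(\mu)$ the direction at $z_\star$ of a geodesic to $x$ has an opposite direction, i.e.\ $\log_{z_\star}(x)\in H_{z_\star}\mathcal{S}$. The engine is first-order minimality of $F$ at $z_\star$: for any geodesic direction $w$ at $z_\star$, the first variation formula gives $F(\gamma_w(t))\le F(z_\star)-2t\int_{\mathcal{S}}\langle \log_{z_\star}(x),w\rangle_{z_\star}\,\diff\mu(x)+o(t)$, and minimality forces $\int_{\mathcal{S}}\langle \log_{z_\star}(x),w\rangle_{z_\star}\,\diff\mu(x)\le 0$ for every such $w$. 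Now suppose for contradiction that $\log_{z_\star}(x_0)\notin H_{z_\star}\mathcal{S}$ for some $x_0\in\mathrm{supp}(\mu)$. Since $x_0$ is in the support, $(\log_{z_\star})_\#\mu$ puts positive mass on every neighbourhood of $\log_{z_\star}(x_0)$, and the direction to points near $x_0$ stays close to that of $x_0$, so a definite amount of mass ``points like $x_0$''. Using the description from \cite{alexander2022alexandrov} of $H_{z_\star}\mathcal{S}$ as a Hilbert subcone of the nonnegatively curved cone $T_{z_\star}\mathcal{S}$, I would exhibit a geodesic direction $w$ --- aligned with the geodesic to $x_0$ but tilted in the direction by which $\log_{z_\star}(x_0)$ fails to lie in $H_{z_\star}\mathcal{S}$ --- along which $\int_{\mathcal{S}}\langle \log_{z_\star}(x),w\rangle_{z_\star}\,\diff\mu(x)>0$, contradicting the inequality above. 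Pinning down this choice of $w$ and showing quantitatively that the remaining mass cannot cancel the contribution near $x_0$ --- which is exactly where nonnegative curvature, as opposed to the mere geodesic-space structure, enters --- is the delicate part.

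Assertion (3) does not use minimality of $z_\star$. The only external input is that, by \cite{alexander2022alexandrov}, $(H_{z_\star}\mathcal{S},\mathrm{C}_{z_\star})$ is a Hilbert space and carries a nearest-point projection $\pi:T_{z_\star}\mathcal{S}\to H_{z_\star}\mathcal{S}$ (single-valued here) with $\langle u,v\rangle_{z_\star}=\langle u,\pi(v)\rangle_{z_\star}$ for $u\in H_{z_\star}\mathcal{S}$, $v\in T_{z_\star}\mathcal{S}$. Granting this, the argument is bookkeeping: under the hypothesis $x\mapsto\log_{z_\star}(x)$ is a Bochner-integrable $H_{z_\star}\mathcal{S}$-valued function on $\mathrm{supp}(\nu)$ (its norm is $d(z_\star,\cdot)\in L^1(\nu)$), with Bochner integral $\int_{H_{z_\star}\mathcal{S}}u\,\diff\nu_\#(u)$ by definition of the pushforward; and for fixed $y$ the map $u\mapsto\langle u,\log_{z_\star}(y)\rangle_{z_\star}=\langle u,\pi(\log_{z_\star}(y))\rangle_{z_\star}$ is a bounded linear functional on $H_{z_\star}\mathcal{S}$. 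Identity \eqref{eq:int-hilbert-cone} is then exactly the fact that a bounded linear functional commutes with the Bochner integral.

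Assertion (1) then follows quickly. For the bound $\le 0$: specialize $\int_{\mathcal{S}}\langle\log_{z_\star}(x),w\rangle_{z_\star}\,\diff\mu(x)\le 0$ from the proof of (2) to $w$ the direction of a geodesic from $z_\star$ to $y$ and multiply by $d(z_\star,y)\ge 0$, getting $\int_{\mathcal{S}}\langle\log_{z_\star}(x),\log_{z_\star}(y)\rangle_{z_\star}\,\diff\mu(x)\le 0$ for every $y$; integrating in $y$ against $\mu$ gives $\le 0$ for the double integral. For the bound $\ge 0$: by (2), $\mu$ satisfies the hypothesis of (3), so with $\bar u:=\int_{H_{z_\star}\mathcal{S}}u\,\diff\mu_\#(u)\in H_{z_\star}\mathcal{S}$, (3) gives $\int_{\mathcal{S}}\langle\log_{z_\star}(x),\log_{z_\star}(y)\rangle_{z_\star}\,\diff\mu(x)=\langle\bar u,\log_{z_\star}(y)\rangle_{z_\star}$; integrating in $y$ against $\mu$ (again using (2), so that $\log_{z_\star}(y)\in H_{z_\star}\mathcal{S}$, together with linearity of the Hilbert inner product) gives $\iint\langle\log_{z_\star}(x),\log_{z_\star}(y)\rangle_{z_\star}\,\diff\mu(x)\,\diff\mu(y)=\langle\bar u,\bar u\rangle_{z_\star}=\|\bar u\|^2\ge 0$. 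The two bounds force \eqref{eq:char-barycenter}, and incidentally $\bar u=0$.
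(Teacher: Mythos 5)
The paper does not reprove this statement; it is imported wholesale from \cite[Theorem 7]{le2019fast}, and the only argument the paper records is for assertion (1): the inequality $\iint\langle\log_z(x),\log_z(y)\rangle_z\,\mathrm{d}\mu(x)\mathrm{d}\mu(y)\ge 0$ holds at \emph{every} point $z$ by the Lang--Schroeder inequality, while first-order optimality at $z_\star$ gives $\int\langle\log_{z_\star}(x),\log_{z_\star}(y)\rangle_{z_\star}\,\mathrm{d}\mu(x)\le 0$ for all $y$; integrating the latter in $y$ against $\mu$ and combining yields the equality. Your treatment of assertion (3) is essentially sound: once one grants that the nonnegatively curved cone $T_{z_\star}\mathcal{S}$ splits off $H_{z_\star}\mathcal{S}$ as a factor --- which is what underlies your identity $\langle u,v\rangle_{z_\star}=\langle u,\pi(v)\rangle_{z_\star}$, and is strictly more than the bare fact recorded in the paper that $H_{z_\star}\mathcal{S}$ is a Hilbert space --- the claim is indeed the commutation of a Bochner integral with a bounded linear functional, modulo a separability check for Pettis measurability.

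The genuine gap is assertion (2), and it is not a technicality you can defer: it carries the entire weight of the theorem, and your architecture makes everything else rest on it, since you derive the $\ge 0$ half of (1) from (2) and (3) rather than from Lang--Schroeder. Your proposed mechanism --- find a geodesic direction $w$ ``tilted in the direction by which $\log_{z_\star}(x_0)$ fails to lie in $H_{z_\star}\mathcal{S}$'' and show $\int\langle\log_{z_\star}(x),w\rangle_{z_\star}\,\mathrm{d}\mu(x)>0$ --- is not a construction: failure to possess an opposite vector is not a deficit ``in a direction'' one can tilt toward, and first-order optimality only constrains integrals against directions of genuine geodesics emanating from $z_\star$ (or limits thereof), a set that in $(\mathcal{D}_2,\mathrm{W}_2)$ is visibly too poor to separate an arbitrary tangent vector from $H_{z_\star}\mathcal{S}$; the paper's own example of a tangent vector realized by no geodesic makes this concrete. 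The argument in the literature runs in the opposite order: one first establishes the equality (1) via Lang--Schroeder plus optimality, and then extracts (2) from the \emph{equality case} of the Lang--Schroeder inequality, which is where nonnegative curvature does its real work. By discarding Lang--Schroeder you have removed the tool that proves (2) and replaced it with a claim whose justification is exactly the missing step. (Your instinct does succeed in toy cases: for $\mu=\frac12(\delta_{x_1}+\delta_{x_2})$, plugging $w=\log_{z_\star}(x_i)$ into the optimality inequality and using $\langle u,v\rangle_{z_\star}\ge -\|u\|\|v\|$ already forces $\angle_{z_\star}(\log_{z_\star}(x_1),\log_{z_\star}(x_2))=\pi$; but no such direct choice of $w$ is available for general $\mu$, and that is precisely the delicate part you left open.)
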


For property (i), at any point $z\in\mathcal{M}$ the inequality 
$\displaystyle 
    \iint \langle \log_{z}(x),\, \log_{z}(y)\rangle_{z}\,\mathrm{d}\mu(x)\mathrm{d}\mu(y)\geq 0
$
holds as a consequence of the Lang--Schroeder inequality \citep{le2020note,lang1997kirszbraun}. Furthermore, if $z=z_\star$ is a Fréchet mean, then we have
$\displaystyle
    \int \langle \log_{z_\star}(x),\log_{z_\star}(y)\rangle_{z_\star} \diff{\mu}(x) \leq 0
$
for all $y\in\mathcal{M}$, which yields \eqref{eq:char-barycenter} \citep{le2020note}.

For property (ii), note that although the Hilbert subcone is defined at any point in $\mathcal{M}$, it may be trivial as there may not exist a pair of tangent vectors with opposite directions, as in the space of persistence diagrams.  We formalize this fact below.

\begin{proposition}
The Hilbert subcone at the empty persistence diagram $D_\emptyset$ is trivial with a single point, i.e., $H_{D_\emptyset}(\mathcal{S}_2)=\{o_{D_\emptyset}\}$.
\end{proposition}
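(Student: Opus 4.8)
The plan is to reduce the statement to showing that no two directions emanating from $D_\emptyset$ subtend an Alexandrov angle of $\pi$. Indeed, by the definition of the Hilbert subcone the tip $o_{D_\emptyset}=[v,0]$ always lies in $H_{D_\emptyset}\mathcal{D}_2$ (it is opposite to itself), so it suffices to rule out every tangent vector $[u,s]$ with $s\neq 0$; such a vector lies in $H_{D_\emptyset}\mathcal{D}_2$ only if there is some $[v,s]$ with $\angle_{D_\emptyset}(u,v)=\pi$. The underlying geometric reason this cannot happen is that every geodesic leaving $D_\emptyset$ moves diagonal points into $\Omega$, i.e.\ in the single perpendicular direction $(-1,1)$, so no outgoing direction can be antipodal to another. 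I would make this precise through a Pythagorean-type inequality rather than through an explicit description of $T_{D_\emptyset}\mathcal{D}_2$.

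First I would establish that for any two persistence diagrams $D_1,D_2$ one has $\mathrm{W}_2^2(D_1,D_2)\le \mathrm{W}_2^2(D_1,D_\emptyset)+\mathrm{W}_2^2(D_2,D_\emptyset)$, by exhibiting the (generally suboptimal) bijection that matches each off-diagonal point of $D_1$ to its orthogonal projection on $\partial\Omega$, pulls each off-diagonal point of $D_2$ back from its own projection on $\partial\Omega$, and matches the remaining diagonal points to one another with zero cost; this is a legitimate bijection of multisets precisely because $\partial\Omega$ appears with infinite multiplicity in every diagram, and its cost is exactly $\sum_{x\in D_1}\|x^\bot\|^2+\sum_{y\in D_2}\|y^\bot\|^2=\mathrm{W}_2^2(D_1,D_\emptyset)+\mathrm{W}_2^2(D_2,D_\emptyset)$. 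Applying this with $D_1=\gamma(t)$ and $D_2=\eta(s)$ for two nonconstant geodesics $\gamma,\eta$ emanating from $D_\emptyset$ (parametrized on $[0,1]$), and using that $\mathrm{W}_2(\gamma(t),D_\emptyset)=t\,\mathrm{W}_2(\gamma(1),D_\emptyset)$ and likewise for $\eta$, the numerator in the defining limit for $\angle_{D_\emptyset}(v_\gamma,v_\eta)$ is nonnegative for all small $s,t>0$; dividing by the positive denominator $2\,\mathrm{W}_2(\gamma(t),D_\emptyset)\,\mathrm{W}_2(\eta(s),D_\emptyset)$ and letting $s,t\to 0$ gives $\cos\angle_{D_\emptyset}(v_\gamma,v_\eta)\ge 0$, hence $\angle_{D_\emptyset}(v_\gamma,v_\eta)\le \pi/2$. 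The limit exists because $(\mathcal{D}_2,\mathrm{W}_2)$ is an Alexandrov space of nonnegative curvature.

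To finish I would pass to the space of directions: geodesic directions are dense in $\widehat{\Gamma}_{D_\emptyset}$ by construction of the completion, and $\angle_{D_\emptyset}$ is continuous since it is the metric on $\widehat{\Gamma}_{D_\emptyset}$, so the bound $\angle_{D_\emptyset}(\cdot,\cdot)\le \pi/2$ extends from geodesic directions to all of $\widehat{\Gamma}_{D_\emptyset}$. In particular no pair of directions has angle $\pi$, so no $[u,s]$ with $s\neq 0$ admits an opposite vector, and therefore $H_{D_\emptyset}\mathcal{D}_2=\{o_{D_\emptyset}\}$. The only delicate point I expect is the bookkeeping in the Pythagorean inequality — verifying that ``send everything to the diagonal'' really defines a bijection of the two underlying multisets, which rests entirely on the infinite multiplicity of $\partial\Omega$; the angle estimate and the passage to the completion are then routine.
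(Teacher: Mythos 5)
Your proposal is correct and follows essentially the same route as the paper: the key step in both is the Pythagorean-type inequality $\mathrm{W}_2^2(D_1,D_2)\le \mathrm{W}_2^2(D_1,D_\emptyset)+\mathrm{W}_2^2(D_2,D_\emptyset)$ obtained from the ``route everything through the diagonal'' bijection, which forces all Alexandrov angles at $D_\emptyset$ to be at most $\pi/2$ and hence rules out opposite directions. Your write-up is in fact more careful than the paper's, since you explicitly justify the limit in the angle definition and the extension of the bound from geodesic directions to the completed space of directions.
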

\begin{proof}
For any two nonempty persistence diagrams $D_1,D_2$, note that assigning all points to the diagonal gives a trivial bijection between $D_1$ and $D_2$. We have
\begin{equation*}
    \mathrm{W}_2^2(D_1,D_\emptyset)+\mathrm{W}_2^2(D_2,D_\emptyset)\geq \mathrm{W}_2^2(D_1,D_2),
\end{equation*}
meaning that the angle between any two directions at $D_\emptyset$ is bounded by $\frac{\pi}{2}$. Thus, the Hilbert subcone only consists of the tip $o_{D_\emptyset}$.
\end{proof}

   For any $D\in\mathcal{S}_2$, the Hilbert subcone $H_D\mathcal{S}_2$ is a direct sum of $\mathbb{R}^2$ at each off-diagonal points of $D$. In Section \ref{sec:metric-geometry}, we have shown that the tangent vector at $D$ is a collection of vectors $\{v_i\}_{i\in I}\cup\{v_j\}_{j\in J}$ where $I$ is the index set of off-diagonal points and for any $j\in J$, $\frac{v_j}{\|v_j\|}=\frac{1}{\sqrt{2}}(-1,1)$. The tangent vector admits an opposite if and only if $J=\emptyset$.  

Property (ii) of Theorem \ref{thm:le-fast} implies that the Fréchet mean has equally many or more off-diagonal points than any persistence diagram in the support of the probability measure, since the log map sends a persistence diagram to a tangent vector inside the Hilbert subcone at the Fréchet mean.

\begin{definition}
\label{def:hugging}
Fix $z,y\in \mathcal{M}$, the \emph{hugging function} at $z$ with respect to $y$ is defined as
\begin{equation*}
    \kappa_{z}^y(x) = 1-\frac{\mathrm{C}_{z}^2(\log_{z}(x),\log_{z}(y))-d^2(x,y)}{d^2(y,z)}.
\end{equation*}
\end{definition}

Intuitively, the hugging function at $z$ measures the proximity of $\mathcal{M}$ to the tangent cone $T_z\mathcal{M}$. More importantly, at the Fréchet mean, we have the following equality.

\begin{theorem}{\cite[Theorem 8]{le2022fast}}\label{thm:hugging-equality}
Let $(\mathcal{M},d)$ be an Alexandrov space with nonnegative curvature and $z_\star$ be a Fréchet mean for the probability measure $\mu$. Then
\begin{equation}\label{eq:hugging-equality}
    d^2(y,z_\star)\int \kappa_{z_\star}^y(x)\diff{\mu}(x) = \int (d^2(x,y)-d^2(x,z_\star))\diff{\mu}(x)
\end{equation}
for all $y\in\mathcal{M}$.
\end{theorem}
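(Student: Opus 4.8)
The plan is to reduce the identity \eqref{eq:hugging-equality} to the single assertion that the ``cross term'' $\int_{\mathcal{S}}\langle\log_{z_\star}(x),\log_{z_\star}(y)\rangle_{z_\star}\,\diff{\mu}(x)$ vanishes for every $y$, and then to establish that vanishing from the three parts of Theorem~\ref{thm:le-fast} together with the Hilbert space structure of the Hilbert subcone. For the reduction, I would fix $y\neq z_\star$ (when $y=z_\star$ both sides of \eqref{eq:hugging-equality} are $0$), write $\log_{z_\star}(x)=[v_{z_\star}^x,d(z_\star,x)]$, $\log_{z_\star}(y)=[v_{z_\star}^y,d(z_\star,y)]$, and expand the cone metric \eqref{eq:cone_metric} together with the definition $\langle\log_{z_\star}(x),\log_{z_\star}(y)\rangle_{z_\star}=d(z_\star,x)d(z_\star,y)\cos\angle_{z_\star}(v_{z_\star}^x,v_{z_\star}^y)$ to get
\begin{equation*}
\mathrm{C}_{z_\star}^2(\log_{z_\star}(x),\log_{z_\star}(y))=d^2(x,z_\star)+d^2(y,z_\star)-2\langle\log_{z_\star}(x),\log_{z_\star}(y)\rangle_{z_\star}.
\end{equation*}
Substituting this into Definition~\ref{def:hugging} and multiplying through by $d^2(y,z_\star)$ yields the pointwise identity $d^2(y,z_\star)\,\kappa_{z_\star}^y(x)=d^2(x,y)-d^2(x,z_\star)+2\langle\log_{z_\star}(x),\log_{z_\star}(y)\rangle_{z_\star}$. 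Since $\mu$ has finite second moment and $|\langle\log_{z_\star}(x),\log_{z_\star}(y)\rangle_{z_\star}|\le d(z_\star,x)d(z_\star,y)$, all terms are $\mu$-integrable (with $x\mapsto\log_{z_\star}(x)$ chosen measurable as in the excerpt), so integrating in $x$ against $\mu$ shows that \eqref{eq:hugging-equality} is \emph{equivalent} to $\int_{\mathcal{S}}\langle\log_{z_\star}(x),\log_{z_\star}(y)\rangle_{z_\star}\,\diff{\mu}(x)=0$ for all $y$.

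For the vanishing of the cross term, I would use the second part of Theorem~\ref{thm:le-fast} to place $\log_{z_\star}(\mathrm{supp}(\mu))\subseteq H_{z_\star}\mathcal{S}$, and invoke \cite{alexander2022alexandrov} that $(H_{z_\star}\mathcal{S},\mathrm{C}_{z_\star})$ is a Hilbert space whose inner product is precisely $\langle\cdot,\cdot\rangle_{z_\star}$ (indeed $\langle[u,s],[v,t]\rangle_{z_\star}=\tfrac12(s^2+t^2-\mathrm{C}_{z_\star}^2([u,s],[v,t]))$ from \eqref{eq:cone_metric}). Finite second moment makes the Bochner integral $\bar u:=\int_{H_{z_\star}\mathcal{S}}u\,\diff{\mu_\#}(u)=\int_{\mathcal{S}}\log_{z_\star}(x)\,\diff{\mu}(x)\in H_{z_\star}\mathcal{S}$ well defined, where $\mu_\#=(\log_{z_\star})_\#\mu$. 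The third part of Theorem~\ref{thm:le-fast} applied with $\nu=\mu$ (whose hypotheses hold by the second part) gives, for every $y\in\mathcal{S}$,
\begin{equation*}
\int_{\mathcal{S}}\langle\log_{z_\star}(x),\log_{z_\star}(y)\rangle_{z_\star}\,\diff{\mu}(x)=\langle\bar u,\log_{z_\star}(y)\rangle_{z_\star}.
\end{equation*}
Integrating this against $\mu$ in $y$, using linearity and continuity of $\langle\bar u,\cdot\rangle_{z_\star}$ on $H_{z_\star}\mathcal{S}$ and $\int_{H_{z_\star}\mathcal{S}}u\,\diff{\mu_\#}(u)=\bar u$, and comparing with the first part of Theorem~\ref{thm:le-fast} (equation \eqref{eq:char-barycenter}), I obtain $0=\iint\langle\log_{z_\star}(x),\log_{z_\star}(y)\rangle_{z_\star}\,\diff{\mu}(x)\diff{\mu}(y)=\langle\bar u,\bar u\rangle_{z_\star}=\|\bar u\|^2$. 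Hence $\bar u=o_{z_\star}$, so $\int_{\mathcal{S}}\langle\log_{z_\star}(x),\log_{z_\star}(y)\rangle_{z_\star}\,\diff{\mu}(x)=\langle o_{z_\star},\log_{z_\star}(y)\rangle_{z_\star}=0$ for all $y\in\mathcal{S}$, which combined with the first paragraph proves \eqref{eq:hugging-equality}.

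The routine part is the algebraic reduction; the genuine difficulty is obtaining the vanishing of the cross term for \emph{every} $y$, not merely $\mu$-almost every $y$. First-order optimality of $z_\star$ by itself — equivalently the remark following Theorem~\ref{thm:le-fast}, or a direct computation of $\tfrac{\mathrm{d}}{\mathrm{d}t}\big|_{0^+}F(\gamma(t))$ along a geodesic $\gamma$ from $z_\star$ to $y$ — yields only the inequality $\int_{\mathcal{S}}\langle\log_{z_\star}(x),\log_{z_\star}(y)\rangle_{z_\star}\,\diff{\mu}(x)\le 0$. Upgrading this to an equality at arbitrary $y$ is exactly where the full Lang--Schroeder/Alexandrov structure enters: the third part of Theorem~\ref{thm:le-fast} linearizes the cross term into a bona fide Hilbert inner product $\langle\bar u,\log_{z_\star}(y)\rangle_{z_\star}$ that remains well behaved even when $\log_{z_\star}(y)$ itself lies outside $H_{z_\star}\mathcal{S}$, and the first part forces the barycentric vector $\bar u$ to be the cone tip $o_{z_\star}$. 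The only remaining care concerns integrability and measurability bookkeeping, all of which follow from $\mu$ having finite second moment and from the log map being chosen measurable.
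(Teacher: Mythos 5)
The paper states this result purely as a citation of \cite[Theorem 8]{le2019fast} and supplies no proof of its own, so there is no internal argument to compare against; your reconstruction is correct. The algebraic reduction of \eqref{eq:hugging-equality} to the vanishing of $\int\langle\log_{z_\star}(x),\log_{z_\star}(y)\rangle_{z_\star}\diff{\mu}(x)$ for every $y$ is exact, and your use of parts 1--3 of Theorem~\ref{thm:le-fast} together with the Hilbert subcone structure to conclude $\bar u=o_{z_\star}$ --- thereby upgrading the first-order inequality to an equality at \emph{every} $y$, not merely $\mu$-almost every $y$ --- is precisely the mechanism of the cited source.
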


% \begin{proposition}
% Let $P\in\mathcal{S}_2$ be any nonempty persistence diagram. Then the Hilbert subcone at $P$ is isometric to $\ell^{2}(I)=\{(v_i)_{i\in I}\mid v_i\in \mathbb{R}^2,\sum_{i\in I}\|v_i\|^2< \infty\}$ where $I$ is the index set of all off-diagonal points in $P$.
% \end{proposition}
% \begin{proof}
% Let $\{x_i\mid i\in I\}$ be the set of off-diagonal points in $P$. Any tangent vector assigned to $P$ be the  $(v_i)_{i\in I}$
% \end{proof}

\paragraph{Limitations to Persistence Diagram Space.} The work of \cite{le2022fast} assumes that the hugging function at the barycenter has a positive lower bound for all points in the entirety of the space. This assumption is closely related to the bi-extendibility of geodesics, meaning that a geodesic can be extended for a positive amount of time at both the start and end points. However, in the space of persistence diagrams, no geodesic can extend beyond the diagonal.

\subsection{Convergence of Empirical Fréchet Means of Flat Groupings}

We begin with a computation of the hugging function for flat groupings. The result also gives insight into the terminology of flatness, in the sense of a measure of curvature by the hugging function.

\begin{lemma}\label{lem:hugging-compute}
Let $\bm{\rho}=\frac{1}{L}\sum_{i=1}^LD_i$ be a discrete probability measure and $\mathbf{G}$ be a flat grouping for $D_1,\ldots,D_L$. Suppose $D_1',\ldots,D_B'$ are i.i.d.~samples of $\bm{\rho}$ and $\rho=\frac{1}{B}\sum_{i=1}^LD_i'$ is the empirical measure. Let $D_\star$ be the Fréchet mean of $\bm{\rho}$ and $\bar{D}$ be the Fréchet mean of $\rho$. For any $D_j\in\{D_1,\ldots,D_L\}$, we have
\begin{equation}
    \kappa_{D_\star}^{\bar{D}}(D_j) = \kappa_{\bar{D}}^{D_\star}(D_j) = 1.
\end{equation}
\end{lemma}

\begin{proof}
Let $\Lambda$ be a set of nonnegative numbers $\lambda_1,\ldots,\lambda_L\geq 0$ with $\sum_{j=1}^L\lambda_j=1$, and $\Lambda'$ be another set with the same property. Consider the persistence diagram $D_\Lambda$ such that the off-diagonal points are given by $\mathbf{G}_i^\Lambda = \sum_{i=j}^L\lambda_j\mathbf{G}_i^j$ for every selection $\mathbf{G}_i$. Let $\gamma_\Lambda(t)$ be the geodesic from $D_\star$ to $D_\Lambda$. For any $0\leq t,s\leq 1$, the optimal matching between $\gamma_\Lambda(t)$ and $\gamma_{\Lambda'}(s)$ is given by $t\mathbf{G}_i^\Lambda+(1-t)\bar{\mathbf{G}}_i\mapsto s\mathbf{G}_i^{\Lambda'}+(1-s)\bar{\mathbf{G}}_i$, as they lie within the same cluster. Therefore,
\begin{equation*}
\begin{aligned}
    \cos\angle_{D_\star}&(\log_{D_\star}(D_\Lambda),\,\log_{D_\star}(D_{\Lambda'}))\\
    =& \lim_{t,s\to 0} \frac{\mathrm{W}_2^2(D_\star,\gamma_\Lambda(t))+\mathrm{W}_2^2(D_\star,\gamma_{\Lambda'}(s))-\mathrm{W}_2^2(\gamma_{\Lambda}(t),\gamma_{\Lambda'}(s))}{2\mathrm{W}_2(D_\star,\gamma_\Lambda(t))\mathrm{W}_2(D_\star,\gamma_{\Lambda'}(s))}\\
    =& \lim_{t,s\to 0}\frac{\sum_{i=1}t^2\|\mathbf{G}_i^\Lambda-\bar{\mathbf{G}}_i\|^2+\sum_{i=1}s^2\|\mathbf{G}_i^{\Lambda'}-\bar{\mathbf{G}}_i\|^2-\sum_{i=1}\|t\mathbf{G}_i^\Lambda-s\mathbf{G}_i^{\Lambda'}-(t-s)\bar{\mathbf{G}}_i\|^2}{2ts\sqrt{\dsum_{i=1}\|\mathbf{G}_i^\Lambda-\bar{\mathbf{G}}_i\|^2}\sqrt{\dsum_{i=1}\|\mathbf{G}_i^{\Lambda'}-\bar{\mathbf{G}}_i\|^2}}\\
    =& \frac{\sum_{i=1}\langle \mathbf{G}_i^\Lambda-\bar{\mathbf{G}}_i,\mathbf{G}_i^{\Lambda'}-\bar{\mathbf{G}}_i\rangle}{\sqrt{\dsum_{i=1}\|\mathbf{G}_i^\Lambda-\bar{\mathbf{G}}_i\|^2}\sqrt{\dsum_{i=1}\|\mathbf{G}_i^{\Lambda'}-\bar{\mathbf{G}}_i\|^2}}\\
    =& \frac{\mathrm{W}_2^2(D_\star,D_\Lambda)+\mathrm{W}_2^2(D_\star,D_{\Lambda'})-\mathrm{W}_2^2(D_{\Lambda},D_{\Lambda'})}{2\mathrm{W}_2(D_\star,D_\Lambda)\mathrm{W}_2(D_\star,D_{\Lambda'})}.
\end{aligned}
\end{equation*}
By definition of the cone metric (cf.~\eqref{eq:cone_metric}), we have
\begin{equation*}
\begin{aligned}
    \mathrm{C}_{D_\star}^2&(\log_{D_\star}(D_\Lambda),\,\log_{D_\star}(D_{\Lambda'}))\\
    =&\mathrm{W}_2^2(D_\star,D_\Lambda)+\mathrm{W}_2^2(D_\star,D_{\Lambda'})-2\mathrm{W}_2(D_\star,D_\Lambda)\mathrm{W}_2^2(D_\star,D_{\Lambda'})\cos\angle_{D_\star}(\log_{D_\star}(D_\Lambda),\log_{D_\star}(D_{\Lambda'}))\\
    =&\mathrm{W}_2^2(D_{\Lambda},D_{\Lambda'}),
\end{aligned}
\end{equation*}
which implies that $\kappa_{D_\star}^{D_\Lambda}(D_{\Lambda'})=1$. Specifically, $\kappa_{D_\star}^{\bar{D}}(D_j)=1$ for all $j=1,\ldots,L$.

For the hugging function at $\bar{D}$, a similar computation gives 
\begin{equation*}
    \cos\angle_{\bar{D}}(\log_{\bar{D}}(D_\Lambda),\log_{\bar{D}}(D_{\Lambda'})) = \frac{\mathrm{W}_2^2(\bar{D},D_\Lambda)+\mathrm{W}_2^2(\bar{D},D_{\Lambda'})-\mathrm{W}_2^2(D_{\Lambda},D_{\Lambda'})}{2\mathrm{W}_2(\bar{D},D_\Lambda)\mathrm{W}_2(\bar{D},D_{\Lambda'})}.
\end{equation*}
Thus, the cone metric satisfies $\mathrm{C}_{\bar{D}}^2(\log_{\bar{D}}(D_\Lambda),\log_{\bar{D}}(D_{\Lambda'}))=\mathrm{W}_2^2(D_{\Lambda},D_{\Lambda'})$, which implies $\kappa_{\bar{D}}^{D_\star}(D_j)=1$ for all $j=1,\ldots,L$.
\end{proof}

We are now able to rederive the finite sample convergence rate for flat groupings as in the previous section, but now in the framework of Alexandrov geometry.

\begin{theorem}
Let $\bm{\rho}=\frac{1}{L}\sum_{i=1}^LD_i$ be a discrete probability measure on $\mathcal{S}_2$, and $D_1',\ldots,D_B'$ be i.i.d.~samples. Let $\mathbf{G}$ be a flat grouping for $D_1,\ldots,D_L$. Let $D_\star$ be the population Fréchet mean and $\bar{D}$ be the empirical Fréchet mean with respect the empirical measure $\rho=\frac{1}{B}\sum_{i=1}^LD_i'$. Then
    \begin{equation}
        \mathbb{E}[\mathrm{W}_2^2(\bar{D},D_\star)]\le \frac{\sigma^2}{B},
    \end{equation}
where $\sigma^2=\mathbb{V}(\mathbf{G})$ is the variance of the grouping.
\end{theorem}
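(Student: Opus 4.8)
The plan is to pull the whole problem back to the Hilbert subcone $H_{D_\star}\mathcal{D}_2$ through the log map, where it collapses to the elementary bias--variance identity for i.i.d.\ sample means in a Hilbert space; Lemma~\ref{lem:hugging-compute} is exactly what certifies that the relevant neighbourhood of $D_\star$ is isometrically flat, making this reduction legitimate. First I would record the structural consequences of flatness. Conditions~1 and~3 together force every nontrivial selection of $\mathbf{G}$ to consist of exactly $L$ off-diagonal points: a diagonal entry in a nontrivial selection would lie within $\lambda$ of an off-diagonal entry by condition~1, contradicting condition~3 (the observation already used in the proof of Theorem~\ref{thm:unique-mean}). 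Hence the grouping $G$ induced on any subcollection of $D_1,\dots,D_L$, in particular on the i.i.d.\ sample $D_1',\dots,D_B'$, is again flat with the \emph{same} $\lambda$ (subsampling columns leaves the pairwise distances between the fixed off-diagonal points unchanged), its nontrivial rows are exactly those of $\mathbf{G}$, and each contributes all $B$ of its entries as off-diagonal points, so no binomial thinning occurs, in contrast to the general case of the previous theorem. By Theorem~\ref{thm:unique-mean}, $D_\star=\mathrm{mean}(\mathbf{G})$ is the unique population Fréchet mean and $\bar D=\mathrm{mean}(G)$ is the unique Fréchet mean of $\rho$.

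Next I would compute the images under $\log_{D_\star}$. Writing $\bar{\mathbf{G}}_i$ for the off-diagonal point of $D_\star$ indexed by a nontrivial row $i$, the geodesic from $D_\star$ to $D_j$ has off-diagonal points $(1-t)\bar{\mathbf{G}}_i+t\mathbf{G}_i^j$, and likewise for $\bar D$, whose off-diagonal points are $\bar G_i=\tfrac1B\sum_{j=1}^B\mathbf{G}_i^{c(j)}$, with $c(j)\in\{1,\dots,L\}$ the i.i.d.\ uniform label of $D_j'$. Thus $\log_{D_\star}(D_j)$ and $\log_{D_\star}(\bar D)$ are represented by the displacement vectors $(\mathbf{G}_i^j-\bar{\mathbf{G}}_i)_i$ and $(\bar G_i-\bar{\mathbf{G}}_i)_i$ in the finite-dimensional Euclidean space $\bigoplus_i\mathbb{R}^2$. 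Lemma~\ref{lem:hugging-compute} --- more precisely the selection-wise optimality of the matchings and the ensuing metric identity $\mathrm{W}_2^2(D_\Lambda,D_{\Lambda'})=\sum_i\|\mathbf{G}_i^\Lambda-\mathbf{G}_i^{\Lambda'}\|^2$ proved there for all selection-wise convex combinations --- shows that on the affine family spanned by $D_1,\dots,D_L$ the cone metric agrees with $\mathrm{W}_2$, and together with Theorem~\ref{thm:le-fast}(2) it places all of these directions inside $H_{D_\star}\mathcal{D}_2$ and identifies $\log_{D_\star}$ on that family with the linear embedding above. In particular $\log_{D_\star}(\bar D)=\tfrac1B\sum_{j=1}^B\log_{D_\star}(D_j')$ and $\mathrm{W}_2^2(\bar D,D_\star)=\bigl\|\tfrac1B\sum_{j=1}^B\log_{D_\star}(D_j')\bigr\|^2$, the norm being that of $H_{D_\star}\mathcal{D}_2$.

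Finally, combining \eqref{eq:char-barycenter} with \eqref{eq:int-hilbert-cone} (applied twice with $\nu=\mu=\bm\rho$) shows that $(\log_{D_\star})_\#\bm\rho$ has vanishing mean in $H_{D_\star}\mathcal{D}_2$, i.e.\ $\mathbb{E}[\log_{D_\star}(D_1')]=0$. The vectors $\log_{D_\star}(D_j')$ are then i.i.d.\ and mean zero in a Hilbert space, the cross terms cancel, and
\[
\mathbb{E}\bigl[\mathrm{W}_2^2(\bar D,D_\star)\bigr]=\frac1{B^2}\sum_{j=1}^B\mathbb{E}\bigl\|\log_{D_\star}(D_j')\bigr\|^2=\frac1B\,\mathbb{E}\bigl\|\log_{D_\star}(D_1')\bigr\|^2=\frac1B\int\mathrm{W}_2^2(D_\star,x)\,\diff\bm\rho(x)=\frac{F(D_\star)}{B}=\frac{\sigma^2}{B}.
\]
Alternatively one can bypass the tangent cone entirely: feeding $\kappa_{\bar D}^{D_\star}\equiv1$ on $\mathrm{supp}(\rho)$ from Lemma~\ref{lem:hugging-compute} into the hugging equality \eqref{eq:hugging-equality} (with $z_\star=\bar D$, $y=D_\star$, $\mu=\rho$) gives the ``parallelogram'' identity $\mathrm{W}_2^2(\bar D,D_\star)=F_\rho(D_\star)-F_\rho(\bar D)$, whose expectation is $\sigma^2-\tfrac{B-1}{B}\sigma^2=\tfrac{\sigma^2}{B}$ by the standard bias of the empirical variance computed over the full rows of $\mathbf{G}$.

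The main obstacle is the middle step: verifying that $\log_{D_\star}$ is genuinely \emph{linear} on the affine family spanned by $D_1,\dots,D_L$ and that its image lies inside the Hilbert subcone, so that ``average of logs $=$ log of the empirical mean'' and the Pythagorean cancellation of cross terms are both valid. This is precisely what flatness buys through Lemma~\ref{lem:hugging-compute}: well-separated clusters that stay bounded away from the diagonal keep every optimal matching selection-wise, which is exactly why the hugging function is identically $1$ and the local geometry is flat (hence the terminology); drop any one of the three flatness conditions and the hugging function can fall strictly below $1$, the log map ceases to be affine, and only an inequality with curvature-dependent constants (as in \cite{le2019fast}) survives.
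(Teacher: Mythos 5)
Your proposal is correct, and its main route is genuinely different in execution from the paper's, even though both ultimately reduce the problem to an i.i.d.\ sample-mean computation in a Hilbert space. The paper starts from the hugging equality (Theorem~\ref{thm:hugging-equality}) at $\bar D$ with $\kappa\equiv 1$ from Lemma~\ref{lem:hugging-compute}, rewrites the resulting integral as $\langle \overline{\log_\star D},\log_\star\bar D\rangle_\star$ via the law of cosines and the linearity identity \eqref{eq:int-hilbert-cone}, and then applies Cauchy--Schwarz in the Hilbert subcone before invoking the rate $\mathbb{E}[\mathrm{C}_\star^2(\overline{\log_\star D},o_\star)]=\sigma^2/B$; the Cauchy--Schwarz step is what produces the inequality. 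You instead use the metric identity from Lemma~\ref{lem:hugging-compute} to realize $\log_{D_\star}$ as a linear isometric embedding of the selection-wise affine family into $\bigoplus_i\mathbb{R}^2$, so that $\log_{D_\star}(\bar D)=\tfrac1B\sum_j\log_{D_\star}(D_j')$ holds exactly and the bound becomes the equality $\mathbb{E}[\mathrm{W}_2^2(\bar D,D_\star)]=\sigma^2/B$ by the elementary bias--variance identity --- no Hilbert subcone, no \eqref{eq:int-hilbert-cone}, no Cauchy--Schwarz. Your secondary route (hugging equality at $\bar D$ plus the $\tfrac{B-1}{B}$ bias of the sample variance) coincides with the paper's first displayed step and then finishes more directly. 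Two further merits of your write-up: you verify explicitly that the induced sample grouping is flat with the same $\lambda$, so that $\bar D=\mathrm{mean}(G)$ really is the unique empirical Fréchet mean by Theorem~\ref{thm:unique-mean} --- a point the paper leaves implicit but needs in order to apply Theorem~\ref{thm:hugging-equality} at $\bar D$ --- and you note that no binomial thinning of rows occurs, which is exactly the simplification flatness buys over the general grouping rate. The only caveat is that both your argument and the paper's lean on the same unproved-in-detail assertion inside Lemma~\ref{lem:hugging-compute}, namely that optimal matchings between selection-wise convex combinations remain selection-wise; you inherit that assumption rather than introduce a new gap.
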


\begin{proof}
By Theorem \ref{thm:hugging-equality} and Lemma \ref{lem:hugging-compute}, we have 
$\displaystyle
%\begin{equation*}
%\begin{aligned}
    \mathrm{W}_2^2(D_\star,\bar{D}) = \int(\mathrm{W}_2^2(D_\star,D)-\mathrm{W}_2^2(\bar{D},D))\diff{\rho}(D).$
%\end{aligned}
%\end{equation*}
By the equality of \eqref{eq:int-hilbert-cone} in Theorem \ref{thm:le-fast}, we have
\begin{equation*}
\begin{aligned}
    2\mathrm{W}_2^2(D_\star,\bar{D}) &= \int(\mathrm{W}_2^2(D_\star,D)+\mathrm{W}_2^2(D_\star,\bar{D})-\mathrm{W}_2^2(\bar{D},D))\diff{\rho}(D)\\
    &= 2\int \mathrm{W}_2(D_\star,D)\mathrm{W}_2(D_\star,\bar{D})\cos\angle_\star(\log_\star D,\log_\star \bar{D})\diff{\rho}(D)\\
    &= 2\int \langle \log_\star D,\log_\star \bar{D}\rangle_\star\diff{\rho}(D)\\
    &= 2\langle \overline{\log_\star D}, \log_\star \bar{D}\rangle_\star,
\end{aligned}
\end{equation*}
where $\overline{\log_\star D}$ denotes the mean of the pushforward empirical measure $(\log_\star)_\#\rho$. Note that $\log_\star(\mathrm{supp}(\rho))\subseteq \log_\star (\mathrm{supp}(\bm\rho))\subseteq H_\star\mathcal{S}_2$. In the Hilbert subcone, we have $\mathrm{W}_2^2(D_\star,\bar{D})\le \mathrm{C}_\star(\overline{\log_\star D},o_\star)\mathrm{C}_\star(\log_\star\bar{D},o_\star)$. Since $\mathrm{C}_\star(\log_\star \bar{D},o_\star)=\mathrm{W}_2(D_\star,\bar{D})$, then $
    \mathbb{E}[\mathrm{W}_2^2(D_\star,\bar{D})]\leq \mathbb{E}[\mathrm{C}_\star^2(\overline{\log_\star D},o_\star)]$.

In Hilbert spaces, we know that the empirical mean $\overline{\log_\star D}$ converges to the population mean $\mathbb{E}[(\log_\star)_\#\bm{\rho}]=o_\star$ in the following sense
\begin{equation*}
    \mathbb{E}[\mathrm{C}_\star^2(\overline{\log_\star D},o_\star)]=\frac{\sigma^2}{B}
\end{equation*}
where 
$ \displaystyle
    \sigma^2 = \int \mathrm{C}_\star^2(\log_\star D,o_\star)]\diff{\rho}(D) = \int \mathrm{W}_2^2(D,D_\star)\diff{\bm{\rho}}(D), 
$
thus completing the proof.
\end{proof}

\section{Approximating Persistent Homology with Fréchet Means of Truncated Persistence Diagrams}
\label{sec:truncatedPD}

Fréchet means of various representations of persistent homology have been used as an approximation for the true persistent homology when the latter is computationally intractable.  In this section, we demonstrate the role that a flat grouping can play in this setting.

Specifically, let $\mathcal{X}$ be a finite yet large metric space with $N$ points. In practice, it is impossible to compute its persistence diagram $D[\mathcal{X}]$ directly; this is due to the the large storage required for all simplices from a filtration. In \cite{roycraft2023bootstrapping,chazal2014stochastic,chazal2015subsampling}, bootstrapping was used to approximate popular vectorizations of the persistence diagrams $D[\mathcal{X}]$, including persistence landscapes, Betti numbers and persistence silhouettes. The procedure is as follows: we first place a probability distribution $\mu$ on the dataset $\mathcal{X}$. We then take $B$ sample sets $X_1,\ldots,X_B$, where each sample set $X_i$ consists of $n\ll N$ i.i.d.~sample points drawn from $\mu$. Let $f$ represent some vectorization of persistence diagrams; we compute $f_i$ for each sample set $X_i$. Then the mean vectorization $\bar{f}=\frac{1}{B}\sum_{i=1}^B f_i$ is a statistical approximation of the true vectorization of $D[\mathcal{X}]$.

Compared to vectorizations of persistence diagrams, the bootstrapping method is a significantly more complicated procedure on the original space of persistence diagrams due to its nonlinearity. \cite{cao2022approximating} propose the Fréchet mean of $D[X_1],\ldots,D[X_B]$ as an approximation of $D[\mathcal{X}]$. Experimentally, the Fréchet mean of a sample persistence diagrams performs well in a bootstrapping procedure. In theory, however, it is not possible to derive a finite sample convergence rate for Fréchet mean since there is no expression for a variance estimation, in general. Thus, in practice, we do not know how to tune the parameters (i.e., determine how many subsamples to take and what their size should be) when bootstrapping and computing Fréchet means to minimize approximation error with an optimal number of sample sets. 

We now show that by cropping points near the diagonal, it is possible to create a flat grouping for persistence diagrams $D[X_1],\ldots,D[X_B]$. Suppose $\mathcal{X}$ is dataset sampled from a $k$-dimensional manifold $\mathcal{M}$. We have the following result by \cite{chazal2014convergence}.
\begin{theorem}{\cite[Theorem 2]{chazal2014convergence}}\label{thm:bottleneck}
    Let $\mathcal{M}$ be a compact $k$-dimensional Riemannian manifold and $\mathcal{X}=\{x_1,\ldots,x_N\}$ be a set of i.i.d.~samples from the uniform distribution on $\mathcal{M}$. Then
    $$
    \mathbb{E}[\mathrm{W}_\infty(D[\mathcal{X}],D[\mathcal{M}])]\le C\bigg(\frac{\log N}{N}\bigg)^{\frac{1}{k}}
    $$
    where $\mathrm{W}_\infty$ is the $\infty$-Wasserstein distance, or bottleneck distance, and $C>0$ is a constant only depending on $\mathcal{M}$.
\end{theorem}

For a compact Riemannian manifold $\mathcal{M}$, we define its separation constant $\lambda(\mathcal{M})$ by
$$
\lambda(\mathcal{M}):=\max\{\lambda: \|p_i-p_j\|>\lambda, \|p_i-\partial\Omega\|>\lambda, \forall\, p_i,p_j\in D[\mathcal{M}] \}.
$$
That is, $\lambda(\mathcal{M})$ is the largest separation of off-diagonal points in $D[\mathcal{M}]$ from each other, and from the diagonal $\partial\Omega$.

In the bootstrapping setup described above, we draw sample sets from the distribution
$$\bm{\rho}=\frac{1}{\binom{N}{n}}\sum_{i=1}\delta_{X_i}$$
on the set of all subsets of $\mathcal{X}$ of cardinality $n$. Suppose $\lambda(\mathcal{M})>0$. If $N>n>1$ is such that
$$
C\bigg(\frac{\log n}{n}\bigg)^{\frac{1}{k}}\le \frac{1}{2}\lambda(\mathcal{M}),
$$
by Theorem \ref{thm:bottleneck}, we know that if $p_\ell\in D[X_i]$ is an off-diagonal point such that $\|p_\ell-\partial\Omega\|<\frac{1}{2}\lambda(\mathcal{M})$, then the optimal matching between $D[X_i]$ and $D[\mathcal{M}]$ will match $p_\ell$ to the diagonal $\partial\Omega$. Thus, for each sample $X_i$, we consider its \emph{truncated persistence diagram},
$$
D_{\mathrm{tr}}[X_i]:=\{p_\ell\in D[X_i]:\|p-\partial\Omega\|>\frac{1}{2}\lambda(\mathcal{M})\}.
$$
This is the persistence diagram obtained by cropping all points between the diagonal and the threshold $\frac{1}{2}\lambda(\mathcal{M})$. The optimal matching between $D[X_i]$ and $D[\mathcal{M}]$ then induces a flat grouping among $D_{\mathrm{tr}}[X_1],\ldots,D_{\mathrm{tr}}[X_{\binom{N}{n}}]$. The Fréchet mean of the truncated persistence diagrams is then unique and approximates the persistence diagram $D[\mathcal{X}]$ for the original dataset $\mathcal{X}$, which we will now demonstarate numerically.

\begin{figure}
    \centering
    \begin{subfigure}{0.45\linewidth}
        \centering
        \includegraphics[width=\linewidth]{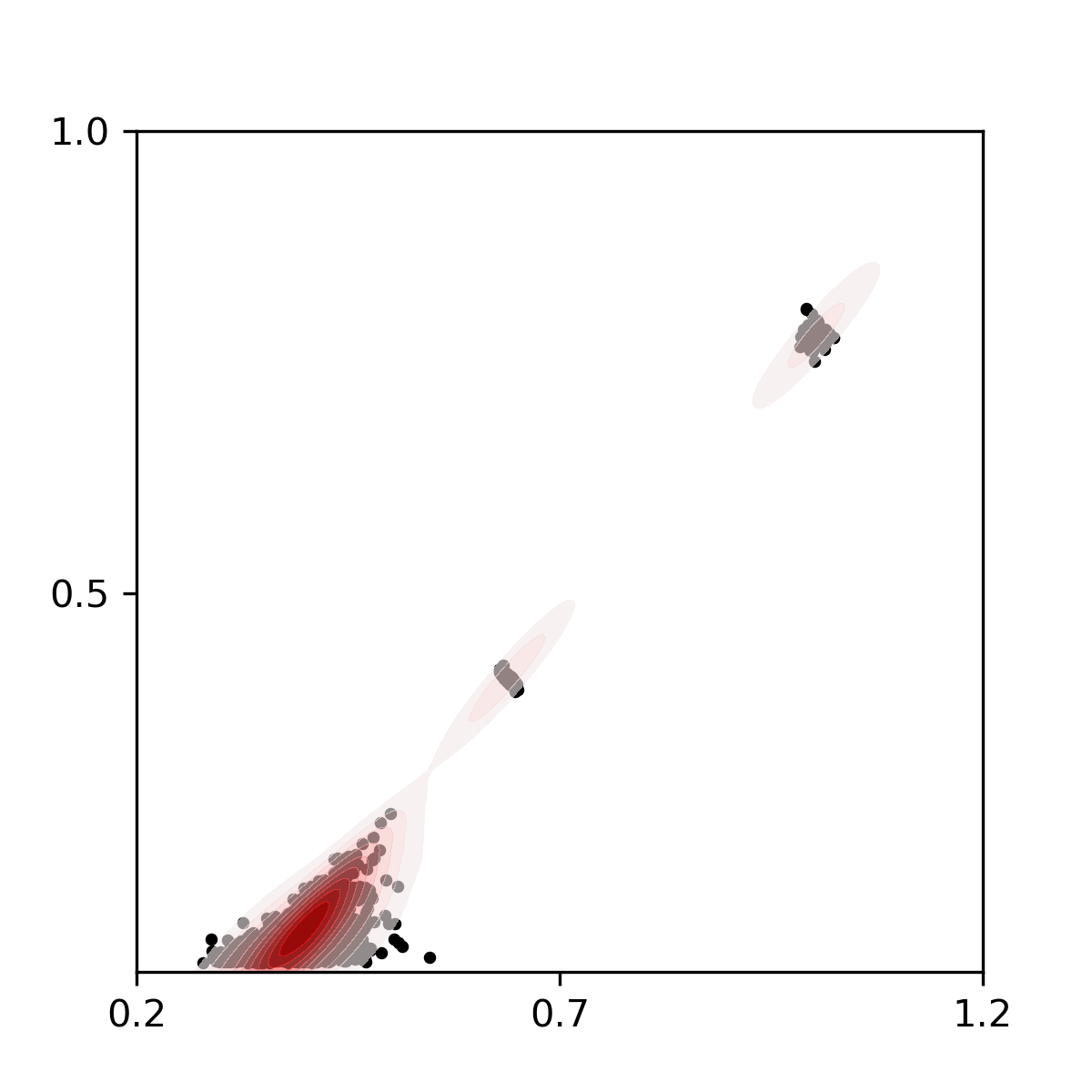}
    \end{subfigure}
    \hfill
    \begin{subfigure}{0.45\linewidth}
        \centering
        \includegraphics[width=\linewidth]{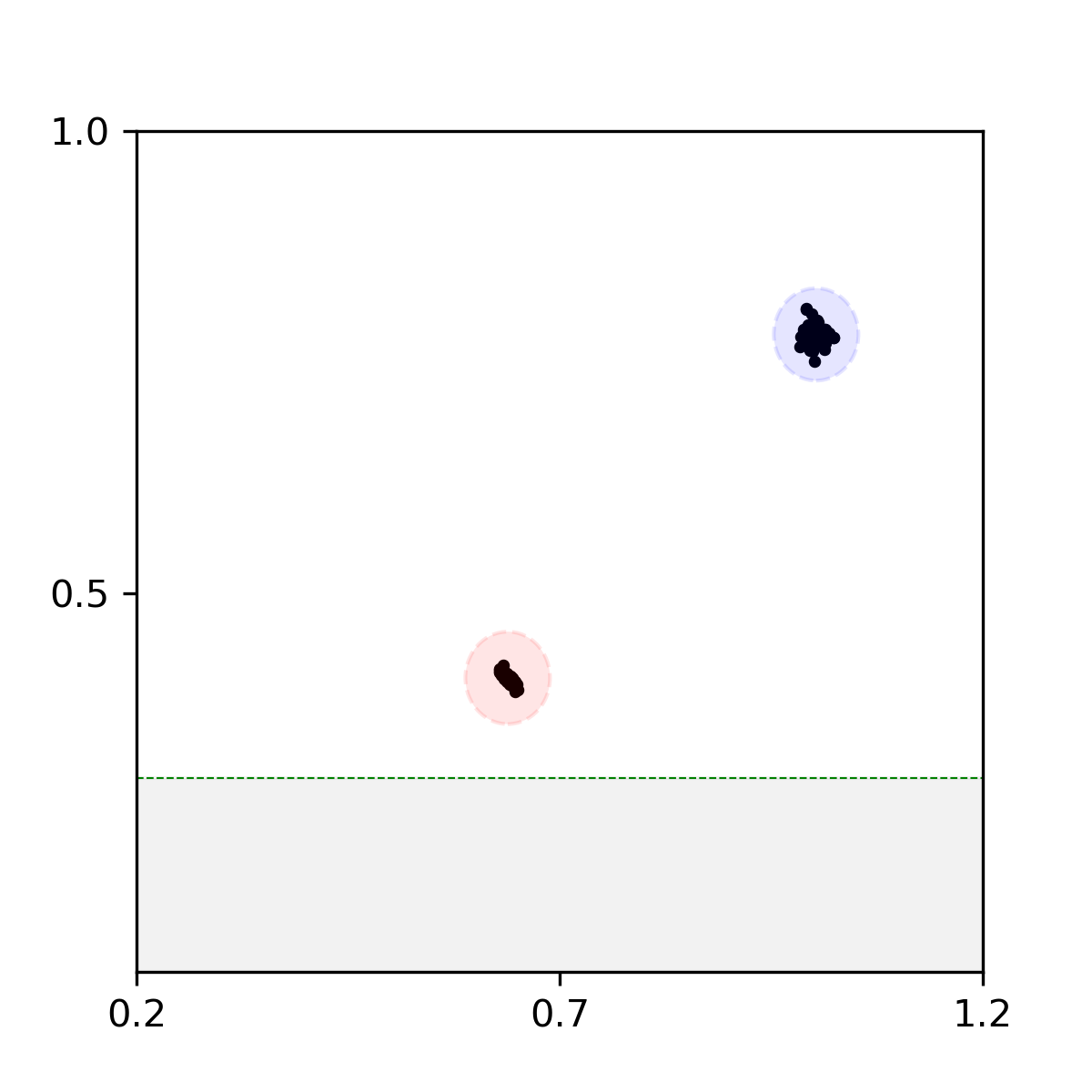}
    \end{subfigure}
    \caption{Illustration of cropping. On the left panel, we have the original persistence diagrams (rotated $45^\circ$ clockwise) computed from 50 sample sets, each consisting of 1000 points from a torus. The shaded background represents the kernel density estimation for off-diagonal points from 50 persistence diagrams. On the right panel, we crop the off-diagonal points below the dotted line. The remaining points form two clusters, thus illustrating that the truncated persistence diagrams form a flat grouping.}
\end{figure}

\paragraph{Circle.}
Take a circle of radius $0.5$. Let $\mathcal{X}$ be a set of 1000 points uniformly sampled from the circle. The 1-dimensional persistence diagram of $\mathcal{X}$ contains $4$ points near the diagonal and one off-diagonal point $(0.0227, 0.8754)$. We then take $B=50$ sample sets $X_1,\ldots,X_{50}$, each consisting of $600$ points from $\mathcal{X}$. We compute the persistence diagrams for $S_i$ and crop the off-diagonal points with a projection to the diagonal being less than $0.2$. The truncated persistence diagrams $D_{\mathrm{tr}}[X_1],\ldots,D_{\mathrm{tr}}[X_{50}]$ form a flat grouping. Their Fréchet mean is given by $\{(0.0395, 0.8582)\}$.

\paragraph{Torus.} 
Take a torus of outer radius $0.8$ and inner radius $0.3$. Let $\mathcal{X}$ be a set of 10,000 points uniformly sampled from the torus. Directly computing the persistent homology of $\mathcal{X}$, we find that the 1-dimensional persistence diagram of $\mathcal{X}$ consists of $(0.0382, 0.5220)$, $(0.0326, 0.8884)$, and $478$ other off-diagonal points that are close to the diagonal. Then we take $B=20$ sample sets $X_1,\ldots,X_{20}$, each consisting of 4000 points from $\mathcal{X}$. We compute the persistence diagrams for $X_i$ and crop the off-diagonal points with a projection to the diagonal being less than $0.3$. The truncated persistence diagrams $D_{\mathrm{tr}}[X_1],\ldots,D_{\mathrm{tr}}[X_{20}]$ form a flat grouping. Their Fréchet mean is given by $\{(0.0597, 0.5222),\, (0.0537, 0.8887)\}$.\\

Code to reproduce these experiments are available at the following repository \url{https://github.com/YueqiCao/CropPD}.

\section{Discussion}
\label{sec:end}

In this paper, we computed a variance expression for a grouping of persistence diagrams and furthermore introduced the notion of flat groupings for sets of persistence diagrams. Flat groupings possess desirable geometric properties that have direct implications on statistical properties in the space of persistence diagrams---specifically, flat groupings generate unique Fréchet means of sets of persistence diagrams.  Additionally, we established convergence for finite samples of groupings and subsequently established convergence for Fréchet means of flat groupings.  We furthermore studied and reinterpreted flat groupings in the general framework of Alexandrov geometry, which contributes to the geometric understanding of the space of persistence diagrams.  On the practical side, we showed that a cropping procedure of persistence diagrams of manifold-valued data gives a construction for flat groupings.  

%However, in practice, real data often generates persistence diagrams exhibiting persistence points near the diagonal which can make it difficult to construct flat groupings in practice. 

%{\color{red} In Section \ref{sec:truncatedPD}, we showed that it is possible to construct flat groupings by cropping points near the boundary of the persistence diagram.

Our work inspires several directions for future research; we now discuss two important considerations.  Given a set of persistence diagrams, an interesting question is how to systematically determine an appropriate cropping of off-diagonal points to approximate the Fréchet mean. Such a direction would align with existing work that proposes a statistical approach to crop off-diagonal points based on the construction of confidence regions around the diagonal \citep{fasy2014confidence}.  This is a challenging question because more recent work demonstrates that persistence diagrams with many points near the diagonal may in fact correspond to datasets (point clouds) with very clear topological signal \citep{cycleregistration}.  The question of finding an appropriate cropping that preserves ``true'' signal while concurrently allowing for the construction of flat groupings is a new and different direction that requires a deeper exploration beyond existing methods of topological signal processing or cropping.

Another interesting direction considers an alternative measure of centrality of data, namely, the median, which may also be defined for persistence diagrams and has a similar characterization as the Fréchet mean \citep{turner2020medians}. However, understanding this measure entails an entirely different study, since the median is the minimum of the Fréchet function \eqref{eq:frechet-function} with respect to the 1-Wasserstein distance, which would require studying the space $(\mathcal{S}_1,\mathrm{W}_1)$. Less is known about the geometry of $(\mathcal{S}_1,\mathrm{W}_1)$, since it is not any Alexandrov space of curvature bounded from below or above \citep{turner2013means}, thus none of the prior results established in \citep{le2022fast} and used in this work are applicable.  The notion of a Fréchet mean or median can be generalized to cases where the Fréchet functions are of a more general form where $\mathcal{Y}$ is a set and $(\mathcal{M},d)$ is a metric space, and $c:\mathcal{Y}\times \mathcal{M}\to\mathbb{R}$ is a function which gives rise to the $c$-Fréchet mean as the set minimizing $\min_{x\in\mathcal{M}}\mathbb{E}[c(\mathbf{Y},x)]$ where $\mathbf{Y}$ is a $\mathcal{Y}$-valued random variable. The Fréchet mean and median become special cases of $c$-Fréchet means where $c$ is taken to be the square function and the absolute value function. Statistical properties of general $c$-Fréchet means have previously been studied \citep{abd2009estimation,schotz2022strong,electronic},  which then inspires the study of $c$-Fréchet means in the space of persistence diagrams.

\section*{Acknowledgments}

The authors wish to thank Katharine Turner for helpful discussions.  Y.C.~is funded by a President's PhD Scholarship at Imperial College London.  A.M.~is supported by the Engineering and Physical Sciences Research Council under grant reference [EP/Y028872/1], a London Mathematical Society Emmy Noether Fellowship [EN-2223-01], and an Imperial College London Elsie Widdowson Fellowship.

%\vfill\eject

\bibliographystyle{chicago}
\bibliography{ref}

\begin{thebibliography}{}

\bibitem[\protect\citeauthoryear{Abd-Elfattah and Omima}{Abd-Elfattah and Omima}{2009}]{abd2009estimation}
Abd-Elfattah, A. and A.~Omima (2009).
\newblock Estimation of the unknown parameters of the generalized frechet distribution.
\newblock {\em Journal of Applied Sciences Research\/}~{\em 5\/}(10), 1398--1408.

\bibitem[\protect\citeauthoryear{Alexander, Kapovitch, and Petrunin}{Alexander et~al.}{2022}]{alexander2022alexandrov}
Alexander, S., V.~Kapovitch, and A.~Petrunin (2022).
\newblock Alexandrov geometry: {F}oundations.
\newblock {\em arXiv preprint arXiv:1903.08539\/}.

\bibitem[\protect\citeauthoryear{Altschuler, Chewi, Gerber, and Stromme}{Altschuler et~al.}{2021}]{altschuler2021averaging}
Altschuler, J., S.~Chewi, P.~R. Gerber, and A.~Stromme (2021).
\newblock Averaging on the bures-wasserstein manifold: dimension-free convergence of gradient descent.
\newblock {\em Advances in Neural Information Processing Systems\/}~{\em 34}, 22132--22145.

\bibitem[\protect\citeauthoryear{Burago, Burago, and Ivanov}{Burago et~al.}{2001}]{burago2001course}
Burago, D., Y.~Burago, and S.~Ivanov (2001).
\newblock {\em A course in metric geometry}, Volume~33.
\newblock American Mathematical Society.

\bibitem[\protect\citeauthoryear{Cao and Monod}{Cao and Monod}{2022}]{cao2022approximating}
Cao, Y. and A.~Monod (2022).
\newblock Approximating {P}ersistent {H}omology for {L}arge {D}atasets.
\newblock {\em arXiv preprint arXiv:2204.09155\/}.

\bibitem[\protect\citeauthoryear{Carlier, Oberman, and Oudet}{Carlier et~al.}{2015}]{carlier2015numerical}
Carlier, G., A.~Oberman, and E.~Oudet (2015).
\newblock Numerical methods for matching for teams and wasserstein barycenters.
\newblock {\em ESAIM: Mathematical Modelling and Numerical Analysis\/}~{\em 49\/}(6), 1621--1642.

\bibitem[\protect\citeauthoryear{Chazal, De~Silva, Glisse, and Oudot}{Chazal et~al.}{2016}]{chazal2016structure}
Chazal, F., V.~De~Silva, M.~Glisse, and S.~Oudot (2016).
\newblock {\em The structure and stability of persistence modules}, Volume~10.
\newblock Springer.

\bibitem[\protect\citeauthoryear{Chazal, Fasy, Lecci, Michel, Rinaldo, and Wasserman}{Chazal et~al.}{2015}]{chazal2015subsampling}
Chazal, F., B.~Fasy, F.~Lecci, B.~Michel, A.~Rinaldo, and L.~Wasserman (2015).
\newblock Subsampling methods for persistent homology.
\newblock In {\em International Conference on Machine Learning}, pp.\  2143--2151. PMLR.

\bibitem[\protect\citeauthoryear{Chazal, Fasy, Lecci, Rinaldo, and Wasserman}{Chazal et~al.}{2014}]{chazal2014stochastic}
Chazal, F., B.~T. Fasy, F.~Lecci, A.~Rinaldo, and L.~Wasserman (2014).
\newblock Stochastic convergence of persistence landscapes and silhouettes.
\newblock In {\em Proceedings of the thirtieth annual symposium on Computational geometry}, pp.\  474--483.

\bibitem[\protect\citeauthoryear{Chazal, Glisse, Labru{\`e}re, and Michel}{Chazal et~al.}{2014}]{chazal2014convergence}
Chazal, F., M.~Glisse, C.~Labru{\`e}re, and B.~Michel (2014).
\newblock Convergence rates for persistence diagram estimation in topological data analysis.
\newblock In {\em International Conference on Machine Learning}, pp.\  163--171. PMLR.

\bibitem[\protect\citeauthoryear{Che, Galaz-Garc{\'\i}a, Guijarro, and Membrillo~Solis}{Che et~al.}{2024}]{che2024metric}
Che, M., F.~Galaz-Garc{\'\i}a, L.~Guijarro, and I.~A. Membrillo~Solis (2024).
\newblock Metric geometry of spaces of persistence diagrams.
\newblock {\em Journal of Applied and Computational Topology\/}~{\em 8\/}(8), 2197--2246.

\bibitem[\protect\citeauthoryear{Edelsbrunner and Harer}{Edelsbrunner and Harer}{2010}]{edelsbrunner2010computational}
Edelsbrunner, H. and J.~Harer (2010).
\newblock {\em Computational {T}opology: {A}n {I}ntroduction}.
\newblock American Mathematical Soc.

\bibitem[\protect\citeauthoryear{Edelsbrunner, Harer, et~al.}{Edelsbrunner et~al.}{2008}]{edelsbrunner2008persistent}
Edelsbrunner, H., J.~Harer, et~al. (2008).
\newblock Persistent homology---{A} survey.
\newblock {\em Contemporary mathematics\/}~{\em 453}, 257--282.

\bibitem[\protect\citeauthoryear{Fasy, Lecci, Rinaldo, Wasserman, Balakrishnan, and Singh}{Fasy et~al.}{2014}]{fasy2014confidence}
Fasy, B.~T., F.~Lecci, A.~Rinaldo, L.~Wasserman, S.~Balakrishnan, and A.~Singh (2014).
\newblock Confidence sets for persistence diagrams.
\newblock {\em The Annals of Statistics\/}, 2301--2339.

\bibitem[\protect\citeauthoryear{Lacombe, Cuturi, and Oudot}{Lacombe et~al.}{2018}]{lacombe2018large}
Lacombe, T., M.~Cuturi, and S.~Oudot (2018).
\newblock Large scale computation of means and clusters for persistence diagrams using optimal transport.
\newblock {\em Advances in Neural Information Processing Systems\/}~{\em 31}.

\bibitem[\protect\citeauthoryear{Lang and Schroeder}{Lang and Schroeder}{1997}]{lang1997kirszbraun}
Lang, U. and V.~Schroeder (1997).
\newblock Kirszbraun's theorem and metric spaces of bounded curvature.
\newblock {\em Geometric \& Functional Analysis GAFA\/}~{\em 7\/}(3), 535--560.

\bibitem[\protect\citeauthoryear{Le~Gouic}{Le~Gouic}{2020}]{le2020note}
Le~Gouic, T. (2020).
\newblock A note on flatness of non separable tangent cone at a barycenter.
\newblock {\em Comptes Rendus. Math{\'e}matique\/}~{\em 358\/}(4), 489--495.

\bibitem[\protect\citeauthoryear{Le~Gouic, Paris, Rigollet, and Stromme}{Le~Gouic et~al.}{2022}]{le2022fast}
Le~Gouic, T., Q.~Paris, P.~Rigollet, and A.~J. Stromme (2022).
\newblock Fast convergence of empirical barycenters in alexandrov spaces and the wasserstein space.
\newblock {\em Journal of the European Mathematical Society\/}~{\em 25\/}(6), 2229--2250.

\bibitem[\protect\citeauthoryear{Mileyko, Mukherjee, and Harer}{Mileyko et~al.}{2011}]{mileyko2011probability}
Mileyko, Y., S.~Mukherjee, and J.~Harer (2011).
\newblock Probability measures on the space of persistence diagrams.
\newblock {\em Inverse Problems\/}~{\em 27\/}(12), 124007.

\bibitem[\protect\citeauthoryear{Mordant and Munk}{Mordant and Munk}{2022}]{mordant2022statistical}
Mordant, G. and A.~Munk (2022).
\newblock Statistical analysis of random objects via metric measure laplacians.
\newblock {\em arXiv preprint arXiv:2204.06493\/}.

\bibitem[\protect\citeauthoryear{Munch, Turner, Bendich, Mukherjee, Mattingly, and Harer}{Munch et~al.}{2015}]{munch2015probabilistic}
Munch, E., K.~Turner, P.~Bendich, S.~Mukherjee, J.~Mattingly, and J.~Harer (2015).
\newblock Probabilistic {F}r{\'e}chet means for time varying persistence diagrams.
\newblock {\em Electronic Journal of Statistics\/}~{\em 9\/}(1), 1173--1204.

\bibitem[\protect\citeauthoryear{Reani and Bobrowski}{Reani and Bobrowski}{2023}]{cycleregistration}
Reani, Y. and O.~Bobrowski (2023, may).
\newblock Cycle registration in persistent homology with applications in topological bootstrap.
\newblock {\em IEEE Transactions on Pattern Analysis and Machine Intelligence\/}~{\em 45\/}(05), 5579--5593.

\bibitem[\protect\citeauthoryear{Roycraft, Krebs, and Polonik}{Roycraft et~al.}{2023}]{roycraft2023bootstrapping}
Roycraft, B., J.~Krebs, and W.~Polonik (2023).
\newblock Bootstrapping persistent betti numbers and other stabilizing statistics.
\newblock {\em The Annals of Statistics\/}~{\em 51\/}(4), 1484--1509.

\bibitem[\protect\citeauthoryear{Sch{\"o}tz}{Sch{\"o}tz}{2019}]{electronic}
Sch{\"o}tz, C. (2019).
\newblock {Convergence rates for the generalized Fréchet mean via the quadruple inequality}.
\newblock {\em Electronic Journal of Statistics\/}~{\em 13\/}(2), 4280 -- 4345.

\bibitem[\protect\citeauthoryear{Sch{\"o}tz}{Sch{\"o}tz}{2022}]{schotz2022strong}
Sch{\"o}tz, C. (2022).
\newblock Strong laws of large numbers for generalizations of fr{\'e}chet mean sets.
\newblock {\em Statistics\/}~{\em 56\/}(1), 34--52.

\bibitem[\protect\citeauthoryear{Turner}{Turner}{2013}]{turner2013means}
Turner, K. (2013).
\newblock Means and medians of sets of persistence diagrams.
\newblock {\em arXiv preprint arXiv:1307.8300\/}.

\bibitem[\protect\citeauthoryear{Turner}{Turner}{2020}]{turner2020medians}
Turner, K. (2020).
\newblock Medians of populations of persistence diagrams.
\newblock {\em Homology, Homotopy \& Applications\/}~{\em 22\/}(1).

\bibitem[\protect\citeauthoryear{Turner, Mileyko, Mukherjee, and Harer}{Turner et~al.}{2014}]{turner2014frechet}
Turner, K., Y.~Mileyko, S.~Mukherjee, and J.~Harer (2014).
\newblock Fr{\'e}chet means for distributions of persistence diagrams.
\newblock {\em Discrete \& Computational Geometry\/}~{\em 52\/}(1), 44--70.

\bibitem[\protect\citeauthoryear{Zemel and Panaretos}{Zemel and Panaretos}{2019}]{yoav}
Zemel, Y. and V.~M. Panaretos (2019).
\newblock {Fréchet means and Procrustes analysis in Wasserstein space}.
\newblock {\em Bernoulli\/}~{\em 25\/}(2), 932 -- 976.

\end{thebibliography}

\end{document}